\documentclass[10pt,a4paper]{article}
\usepackage[utf8]{inputenc}
\usepackage{amsmath}
\usepackage{amsfonts}
\usepackage{amssymb}
\author{Andreas Basse-O'Connor, Jan Pedersen,
Mikkel Slot Nielsen, \\ and Victor Rohde \\
Department of Mathematics \\
Aarhus University \\
\{basse, jan, mikkel, victor\}@math.au.dk
}

\listfiles
\usepackage{amssymb}
\usepackage{amsthm}
\usepackage{amscd}
\usepackage{amstext}
\usepackage{fancyhdr}

\usepackage{amsthm}
\usepackage[numbers]{natbib}
\usepackage[T1]{fontenc}
\usepackage{enumerate}
\usepackage[american]{babel}
\usepackage{graphicx}
\usepackage{bbm}
\usepackage[mathscr]{euscript}
\usepackage{mathrsfs}
\usepackage{stmaryrd}
\usepackage{soul}
\usepackage{hyperref}
\usepackage{xspace}
\usepackage[draft,danish]{fixme}
\usepackage{mathtools}
\usepackage{dsfont}
\usepackage{url}
\usepackage[ps,all,dvips]{xy}
\SelectTips{cm}{12}
\CompileMatrices

  \makeatletter
 \useshorthands{"}
 \defineshorthand{"-}{\nobreak-\bbl@allowhyphens}
 \makeatother





\newcommand{\EE}{\mathbb{E}} 

\newcommand{\NN}{\ensuremath{\mathbb{N}}}           
\newcommand{\RR}{\ensuremath{\mathbb{R}}}           

\newcommand\eqindist{\stackrel{d}{=}}
\newcommand{\trace}{\operatorname{trace}} 
\newcommand{\diag}{\operatorname{diag}} 

\theoremstyle{plain}
\newtheorem{lemma}{Lemma}[section]

\newtheorem{theorem}[lemma]{Theorem}
\newtheorem{corollary}[lemma]{Corollary}
\theoremstyle{definition}
\newtheorem{example}[lemma]{Example}

\theoremstyle{definition}
\newtheorem{definition}[lemma]{Definition}
\theoremstyle{remark}

\newtheorem{remark}[lemma]{Remark}

\setcounter{secnumdepth}{4}
\setcounter{tocdepth}{2}

\newcommand{\devnull}[1]{}

\numberwithin{equation}{section}





\date{}
\begin{document}


\title{On infinite divisibility of a class of two-dimensional vectors in the second Wiener chaos}

\author{Andreas Basse-O'Connor, Jan Pedersen, and Victor Rohde}



\numberwithin{equation}{section}

\maketitle

\begin{center} \footnotesize Department of Mathematics, Aarhus University, \textit{E-mail adresses:} basse@math.au.dk (A.~Basse-O'Connor), jan@math.au.dk (J. Pedersen), victor@math.au.dk (V. Rohde) \end{center}

\normalsize
\begin{abstract}

Infinite divisibility of a class of two-dimensional vectors with components in the second Wiener chaos is studied. Necessary and sufficient conditions for infinite divisibility is presented as well as more easily verifiable sufficient conditions. The case where both components consist of a sum of two Gaussian squares is treated in more depth, and it is conjectured that such vectors are infinitely divisible.  

\end{abstract}

\footnotesize\noindent \textit{Keywords:} Sums of Gaussian squares; infinite divisibility; second Wiener chaos\\
MSC 2010: 60E07;  60G15; 62H05; 62H10
\normalsize


\section{Introduction}

Paul Lévy \cite{LEVY} raised the question of infinite divisibility of Gaussian squares, that is, for a centered Gaussian vector $(X_1,\dots,X_n)$ when can $(X_1^2,\dots,X_n^2)$ be written as a sum of $m$ independent identical distributed random vectors for any $m\in \NN$? Several authors have studied this problem. We refer to \cite{EISENBAUM2016,EISENBAUMKASPI,EVANS,GRIFFITHS1970,GRIFFITHS,VEREJONES} and reference therein. These works include several novel approaches and gives a great understanding of when Gaussian squares are infinitely divisible. In this paper we will provide a characterization of infinite divisibility of sums of Gaussian squares which to the best of our knowledge has not been studied in the literature except in special cases. This problem is highly motivated by the fact that sums of Gaussian squares are the usual limits in many limit theorems in the presence of either long range dependence, see \cite{DOBRUSHIN} or \cite{TAQQU}, or degenerate U-statistics, see \cite{JANSON}. In the following we will go in more details.  

Let $Y$ be random variable in the second (Gaussian) Wiener chaos, that is, the closed linear span in $L^2$ of $\{W(h)^2-1 : h \in H, \Vert h \Vert =1 \}$ for a real separable Hilbert space $H$ and an isonormal Gaussian process $W$. For convenience, we assume $H$ is infinite-dimensional. Then there exists a sequence of independent standard Gaussian variables $(\xi_i)$ and a sequence of real numbers $(\alpha_i)$ such that 
\begin{align*}
Y \eqindist \sum_{i=1}^\infty \alpha_i (\xi_i^2 -1),
\end{align*}
where the sum converges in $L^2$ (see for example \cite[Theorem 6.1]{JANSON}). Since the $\xi_i$'s are independent, $(\xi_1^2,\dots,\xi^2_d)$ is infinitely divisible for any $d \geq 1$ and therefore, $Y$ is infinitely divisible. Such a sum of Gaussian squares appears as the limit of U-statistics in the degenerate case (see \cite[Corollary 11.5]{JANSON}). In this case the $\alpha_i$ are certain binomial coefficients times the eigenvalues of operators associated to the U-statistics. We note that the sequence $(\xi_i)$ depends heavily on $Y$, so one can not deduce joint infinite divisibility of random vectors with components in the second Wiener chaos. In particular, for a vector with dimension greater than or equal to three and components in the second Wiener chaos it is well known (cf.\ Theorem \ref{MarcusAndRosen} below) that it need not be infinite divisibility. In between these two cases is the open question of infinite divisibility of a two-dimensional vector with components in the second Wiener chaos. Let $(X_1,\dots, X_{n_1+n_2})$ be a mean zero Gaussian vector for $n_1,n_2 \in \NN$. That any two-dimensional vector in the second Wiener chaos is infinitely divisible is equivalent to
\begin{align}\label{Question}
(d_1 X_1^2+\dots + d_{n_1} X_{n_1}^2,d_{n_1}X_{n_1 +1 }^2+\dots +d_{n_1+n_2}X_{n_1+ n_2}^2)
\end{align}
being infinitely divisible for any $d_1,\dots,d_{n_1+n_2} = \pm 1$, any covariance structure of $(X_1,\dots,X_{n_1+n_2})$, and any $n_1,n_2 \in \NN$ (something that follows by the definition of the second Wiener chaos).

The following theorem, which is due to Griffiths \cite{GRIFFITHS} and Bapat \cite{BAPAT}, is an important first result related to infinite divisibility in the second Wiener chaos. We refer to Marcus and Rosen \cite[Theorem 13.2.1 and Lemma 14.9.4]{MARCUS} for a proof.

\begin{theorem}[Griffiths and Bapat]\label{MarcusAndRosen}
Let $(X_1,\dots,X_n)$ be a mean zero Gaussian vector with positive definite covariance matrix $\Sigma$. Then $(X_1^2,\dots,X_n^2)$ is infinitely divisible if and only if there exists an $n\times n$ matrix $U$ on the form $ \diag(\pm 1,\dots,\pm 1)$ such that $U^t \Sigma^{-1} U$ has non-positive off-diagonal elements.
\end{theorem}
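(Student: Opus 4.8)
The plan is to work entirely with the Laplace transform of the positive vector $(X_1^2,\dots,X_n^2)$. For a mean zero Gaussian vector with covariance $\Sigma$ one has, for $\theta=(\theta_1,\dots,\theta_n)$ with $\theta_i\ge 0$ and $D=\diag(\theta)$,
\[
L(\theta)=\EE\Big[e^{-\frac12\sum_i \theta_i X_i^2}\Big]=\det(I+D\Sigma)^{-1/2}.
\]
Using $\det(I+D\Sigma)=\det(\Sigma)\det(\Sigma^{-1}+D)$ and writing $A:=\Sigma^{-1}$, the Laplace exponent becomes $\psi(\theta):=-\log L(\theta)=\tfrac12\log\det(A+D)-\tfrac12\log\det A$, which is nonnegative and vanishes at $\theta=0$. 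I would first record the gauge symmetry $X_i\mapsto \epsilon_i X_i$ ($\epsilon_i=\pm1$): it leaves $(X_1^2,\dots,X_n^2)$ unchanged while sending $\Sigma\mapsto U\Sigma U$ and $A\mapsto U^t A U$ with $U=\diag(\epsilon)$. Hence the asserted condition is gauge-invariant, and the whole theorem reduces to proving: $(X_i^2)$ is infinitely divisible if and only if, after such a sign change, $A=\Sigma^{-1}$ has non-positive off-diagonal entries.

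The analytic engine is the characterization that a distribution on $[0,\infty)^n$ with Laplace transform $e^{-\psi}$ is infinitely divisible precisely when $e^{-s\psi}$ is completely monotone for every $s>0$, i.e.\ when $\psi$ is a (multivariate) Bernstein function, equivalently each $\partial_i\psi$ is completely monotone. I would then compute the derivatives of $\psi$ in terms of $M:=(A+D)^{-1}$ (symmetric, since $A+D$ is), obtaining $\partial_i\psi=\tfrac12 M_{ii}$, $\partial_i\partial_j\psi=-\tfrac12 M_{ij}^2$, $\partial_i\partial_j\partial_k\psi=M_{ij}M_{jk}M_{ki}$, and in general a signed sum of cyclic products $M_{i_1i_2}M_{i_2i_3}\cdots M_{i_ki_1}$ with prefactor $(-1)^{k-1}$. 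Complete monotonicity of the $\partial_i\psi$ thus translates exactly into the requirement that these cyclic products of entries of $M=(A+D)^{-1}$ be nonnegative for \emph{all} $\theta\ge 0$.

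For sufficiency I would assume (by the gauge reduction) that $A$ has non-positive off-diagonal entries. Then for every $\theta\ge 0$ the matrix $A+D$ is symmetric positive definite with non-positive off-diagonals, i.e.\ a Stieltjes matrix, so its inverse $M$ is entrywise nonnegative; all cyclic products are then trivially nonnegative. To upgrade this to genuine complete monotonicity I would use $M=\int_0^\infty e^{-s(A+D)}\,ds$ together with the fact that $-(A+D)$ is a Metzler matrix, whence $e^{-s(A+D)}\ge 0$ entrywise, and differentiate under the integral via Duhamel's formula: each $\theta$-derivative inserts a factor $-E_{jj}$ sandwiched between nonnegative matrix exponentials, producing the alternating signs that define complete monotonicity. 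This makes $\psi$ a Bernstein function, so $e^{-s\psi}$ is a Laplace transform for every $s>0$, giving infinite divisibility.

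For necessity I would exploit the expansion $M=(A+D)^{-1}=D^{-1}-D^{-1}AD^{-1}+\cdots$, so that $M_{ij}=-A_{ij}/(\theta_i\theta_j)+O(|\theta|^{-3})$ for $i\neq j$ as $\theta\to\infty$. Feeding this into the necessary sign conditions from the previous paragraph, the $k$-cyclic condition in the large-$\theta$ limit reduces to $(-1)^k A_{i_1i_2}A_{i_2i_3}\cdots A_{i_ki_1}\ge 0$, i.e.\ every cycle in the support graph of $A$ is ``balanced''. A classical switching (Harary balance) argument then produces signs $\epsilon_i$ with $\epsilon_i\epsilon_j A_{ij}\le 0$ for all $i\neq j$, so that $U=\diag(\epsilon)$ makes $U^t\Sigma^{-1}U$ have non-positive off-diagonal elements. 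The main obstacle, and the step I expect to be delicate, is twofold: rigorously invoking the multivariate Bernstein/complete-monotonicity characterization of infinite divisibility on the orthant, and converting the resulting \emph{infinite} hierarchy of analytic inequalities (one per $\theta$ and per cyclic index pattern) into the single combinatorial signing statement, where controlling the asymptotics of $(\Sigma^{-1}+D)^{-1}$ and correctly handling vanishing entries/sparse support across all cycle lengths is the crux — precisely the point at which the literature turns to the M-matrix and permanental-process machinery.
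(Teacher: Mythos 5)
First, note that the paper does not prove this theorem itself; it cites Marcus and Rosen (Theorem 13.2.1 and Lemma 14.9.4), so your attempt can only be judged on its own terms. Your sufficiency half is sound and is essentially the standard argument: after the gauge reduction, $\Sigma^{-1}+D$ is a Stieltjes matrix, $-(\Sigma^{-1}+D)$ is Metzler, so $e^{-s(\Sigma^{-1}+D)}\ge 0$ entrywise, and the Duhamel expansion of $M=\int_0^\infty e^{-s(\Sigma^{-1}+D)}\,ds$ makes $\partial_i\psi=\tfrac12 M_{ii}$ completely monotone; combined with the multivariate Bernstein characterization of infinite divisibility on the orthant this gives the implication from the signature condition to infinite divisibility.

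The necessity direction has a genuine gap. The $k$-th mixed partial of $\log\det(\Sigma^{-1}+D)$ with respect to $k$ distinct variables is not a single cyclic product but $(-1)^{k-1}$ times the \emph{sum} over all $(k-1)!/2$ cyclic orderings of $\{i_1,\dots,i_k\}$ of the products $M_{i_1i_{\sigma(2)}}\cdots M_{i_{\sigma(k)}i_1}$. All of these products carry the same denominator $\prod_l\theta_{i_l}^2$ in your large-$\theta$ expansion, so no choice of rates separates them: the limit controls only $(-1)^k\sum_\sigma A_{i_1i_{\sigma(2)}}\cdots A_{i_{\sigma(k)}i_1}\ge 0$, not the sign of each individual cycle product. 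For $k=3$ the sum is a single term, but already for $k=4$ (e.g.\ a chordless $4$-cycle in the support graph of $A$, where all triangle conditions are vacuous) your claim that ``every cycle is balanced'' does not follow, and Harary's switching theorem requires exactly that every individual cycle be positive. This is not a cosmetic issue — the entire technical content of the present paper (Theorem \ref{Tracethm}, Remark 2.5, Theorem \ref{Suffcond}) is built around the fact that these coefficients are sums whose individual terms can be negative while the sum stays non-negative, so a reduction of ``sum $\ge 0$'' to ``each term $\ge 0$'' cannot be waved through. The gap is repairable, but by a different route: use the $k=3$ condition $M_{ij}M_{jk}M_{ki}\ge 0$ on the finite-$\theta$ matrix $M(\theta)$ itself, observe that for generic $\theta$ the off-diagonal entries of $M(\theta)$ are all non-zero so the signed support graph is complete and triangle balance forces global balance, obtain a signature $U$ with $U M(\theta)U\ge 0$ that is locally constant in $\theta$ by continuity, and only then pass to the limit to transfer the sign pattern to $-\Sigma^{-1}$; the careful treatment of identically vanishing entries and reducible cases is precisely the content of the Bapat/Griffiths lemma (Marcus--Rosen, Lemma 14.9.4) that your proposal replaces with an unjustified step.
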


This theorem resolved the question of infinite divisibility of Gaussian squares.  For $n \geq 3$ there is an $n \times n$ positive definite matrix $\Sigma$ where there does not exist an $n \times n$ matrix $U$ on the form $\diag(\pm 1,\dots ,\pm 1)$ such that $U^t \Sigma^{-1} U$ has non-positive off-diagonal elements. Consequently, there are mean zero Gaussian vectors $(X_1,\dots, X_n)$ such that $(X_1^2,\dots ,X_n^2)$ is not infinite divisible whenever $n \geq 3$. 

Eisenbaum \cite{EISENBAUM} and Eisenbaum and Kapsi \cite{EISENBAUMKASPI} found a connection between the condition of Griffiths and Bapat and the Green function of a Markov process. In particular, a Gaussian process has infinite divisible squares if and only if its covariance function (up to a constant function) can be associated with the Green function of a strongly symmetric transient Borel right Markov process.

When discussing the infinite divisibility of the Wishart distribution Shanbhag \cite{SHANBHAG} showed that for any covariance structure of a mean zero Gaussian vector $(X_1,\dots,X_n)$, 
\begin{align*}
(X_1^2,X_2^2+ \dots + X_n^2)
\end{align*}
is infinitely divisible. Furthermore, it was found that infinite divisibility of any bivariate marginals of a centered Wishart distribution can be reduced to infinite divisibility of $(X_1X_2,X_3X_4)$. By the polarization identity, 
\begin{align*}
(X_1X_2,X_3X_4) = \tfrac{1}{4} ( (X_1 +X_2)^2 - (X_1 - X_2)^2, (X_3 + X_4)^2 - (X_3 - X_4)^2). 
\end{align*} 
Consequently, infinite divisibility of any bivariate marginals of a centered Wishart distribution is again related to the question of infinite divisibility of a two-dimensional vector from the second Wiener chaos.

We will be interested in the infinite divisibility of 
\begin{align*}
(X_1^2+\dots +X_{n_1}^2,X_{n_1 +1}^2 +\dots +X_{n_1+n_2}^2),
\end{align*}
i.e., the case $d_1=\dots=d_{n_1+n_2}=1$ in (\ref{Question}). The general case, where $d_i=-1$ for at least one $i$, seems to require new ideas going beyond the present paper. We will have a special interest in the case $n_1 = n_2 = 2$. 

Despite the simplicity of the question, it has proven rather subtle, and a definite answer is not presented. Instead, we give easily verifiable conditions for infinite divisible in the case $n_1 = n_2 =2$ as well as more complicated necessary and sufficient conditions in the general case that may or may not always hold. We will, in addition, investigate the infinite divisibility of $(X_1^2+X_2^2,X_3^2 +X_4^2)$ numerically which, together with Theorem \ref{Suffcond} (ii), leads us to conjecture that infinite divisibility of this vector always holds.

The main results without proofs are presented in Section 2. Section 3 contains two examples and a small numerical discussion. We end with Section 4 where the proofs of the results stated in Section 2 are given.

\section{Main Results}

We begin with a definition which is a natural extension to the present setup (see the proof of Corollary \ref{infcor1}) of the terminology used by Bapat \cite{BAPAT}. 

\begin{definition}
Let $n_1,n_2 \in \NN$. An $(n_1+n_2) \times (n_1+n_2)$ orthogonal matrix $U$ is said to be an $(n_1,n_2)$-signature matrix if 
\begin{align*}
U = \begin{pmatrix}
U_1 & 0 \\ 
0 & U_2  \\
\end{pmatrix}
\end{align*}
where $U_1$ is an $n_1\times n_1$ matrix and $U_2$ is an $n_2\times n_2$ matrix, both orthogonal, and for $0$'s of suitable dimensions. 
\end{definition}

Let $n_1,n_2 \in \NN$ and consider a mean zero Gaussian vector $(X_1,\dots,X_{n_1+n_2})$ with positive definite covariance matrix $\Sigma$. Now we present a necessary and sufficient condition for infinite divisibility of 
\begin{align}\label{genvec}
(X_1^2+\dots +X_{n_1}^2,X_{n_1 +1}^2 +\dots +X_{n_1+n_2}^2).
\end{align}
For $a > 0$, let $Q = I - (I+ a \Sigma)^{-1}$ and write 
\begin{align*}
Q = \begin{pmatrix}
Q_{11} & Q_{12} \\
Q_{21} & Q_{22}
\end{pmatrix}
\end{align*}  
where $Q_{11}$ is an $n_1 \times n_1$ matrix, $Q_{22}$ is an $n_2 \times n_2$ matrix, and $Q_{12}= Q_{21}^t$ (where $Q_{21}^t$ is the transpose of $Q_{21}$) is an $n_1 \times n_2$ matrix. Note that if $\lambda$ is an eigenvalue of $\Sigma$, $\frac{a \lambda}{1+a\lambda}$ is an eigenvalue of $Q$. Since $Q$ is symmetric and has positive eigenvalues, it is positive definite.

\begin{theorem}\label{Tracethm}

The vector in (\ref{genvec}) is infinitely divisible if and only if for all $k,m \in \NN_0$ and for all $a>0$ sufficiently large,
\begin{align}\label{sumoftrace}
\begin{aligned}
\MoveEqLeft \sum \: \trace\: Q_{11}^{k_1} Q_{12}Q_{22}^{m_1} Q_{21} Q_{11}^{k_2} \cdots Q_{11}^{k_{d} }Q_{12}Q_{22}^{m_{d}} Q_{21} Q_{11}^{k_{d+1}}  \\
 &+ \sum \: \trace\: Q_{22}^{m_1} Q_{21}Q_{11}^{k_1} Q_{12} Q_{22}^{m_2}  \cdots Q_{22}^{m_{d-1} }Q_{21}Q_{11}^{k_{d}} Q_{12} Q_{22}^{m_{d+1}} \geq 0,
 \end{aligned}
\end{align}
where the first sum is over all $k_1,\dots, k_{d+1}$ and $m_1, \dots, m_d$ such that 
\begin{align*}
k_1 + \dots + k_{d+1} + d  = k \quad \text{and} \quad m_1 + \dots + m_{d} + d = m,
\end{align*}
and the second sum is over all $m_1,\dots, m_{d+1}$ and $k_1, \dots, k_d$ such that 
\begin{align*}
m_1 + \dots + m_{d+1} + d = m \quad \text{and} \quad  k_1 + \dots + k_{d} + d = k.
\end{align*}
\end{theorem}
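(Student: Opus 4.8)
The plan is to pass to the bivariate Laplace transform, characterise infinite divisibility through complete monotonicity of a single mixed derivative, and then read off that derivative's alternating derivatives at the diagonal point $s_1=s_2=a/2$ as the trace expressions in (\ref{sumoftrace}). First I would record that, writing $D_s=\diag(s_1 I_{n_1},s_2 I_{n_2})$ and $Y=(Y_1,Y_2)$ for the vector in (\ref{genvec}), the Gaussian integral gives
\[
\EE[e^{-\langle s,Y\rangle}]=\det(I+2D_s\Sigma)^{-1/2},
\]
so that $\Psi(s):=-\log\EE[e^{-\langle s,Y\rangle}]=\tfrac12\log\det(I+2D_s\Sigma)$ is real-analytic on a neighbourhood of $[0,\infty)^2$. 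Since $Y$ is a nonnegative vector, it is infinitely divisible if and only if $\Psi$ is the Laplace exponent of a positive infinitely divisible law, i.e.\ a two-variable Bernstein function. Because each marginal $Y_1,Y_2$ is a single element of the second Wiener chaos and hence individually infinitely divisible, the only obstruction is positivity of the part of the Lévy measure $\nu$ living in the open quadrant; this part is seen by $g:=-\partial_{s_1}\partial_{s_2}\Psi$, and a preliminary reduction should turn the problem into showing that $g$ is completely monotone on $(0,\infty)^2$, equivalently $(-1)^{k+m-1}\partial_{s_1}^{k}\partial_{s_2}^{m}\Psi\ge0$ for all $k,m\ge1$ throughout $(0,\infty)^2$.

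Next I would expand $\Psi$ about the diagonal base point $s^0=(a/2,a/2)$, where $I+2D_s\Sigma=I+a\Sigma$ and hence $A:=(I+a\Sigma)^{-1}=I-Q$. Writing $s=s^0+\tau$ and factoring $I+2D_s\Sigma=A^{-1}(I+2AD_\tau\Sigma)$ gives, up to an additive constant,
\[
\Psi(s)=\tfrac12\trace\log(I+2AD_\tau\Sigma)=\tfrac12\sum_{n\ge1}\frac{(-1)^{n-1}}{n}\trace\bigl((2AD_\tau\Sigma)^{n}\bigr).
\]
Because $A$, $\Sigma$ and $Q$ are mutually commuting functions of $\Sigma$ with $\Sigma A=\tfrac1a Q$, cyclic invariance of the trace yields $\trace((2AD_\tau\Sigma)^{n})=(2/a)^{n}\trace((D_\tau Q)^{n})$, and expanding $D_\tau Q=\tau_1P_1Q+\tau_2P_2Q$ (with $P_1,P_2$ the coordinate projections) shows that the coefficient of $\tau_1^{k}\tau_2^{m}$ in $\Psi$ equals $\tfrac{(-1)^{k+m-1}2^{k+m-1}}{(k+m)\,a^{k+m}}$ times $\sum\trace(P_{i_1}Q\cdots P_{i_{k+m}}Q)$, the sum over all sign patterns with $k$ indices equal to $1$ and $m$ equal to $2$. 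Writing $P_iQP_j=Q_{ij}$ in blocks identifies each summand with a cyclic block-word, and grouping these words by their number $d$ of block changes reproduces exactly the two sums in (\ref{sumoftrace}). Consequently
\[
(-1)^{k+m-1}\partial_{s_1}^{k}\partial_{s_2}^{m}\Psi\big|_{s^0}=\frac{k!\,m!\,2^{k+m-1}}{(k+m)\,a^{k+m}}\times(\text{left-hand side of (\ref{sumoftrace})}),
\]
a strictly positive multiple, so the diagonal conditions coincide with (\ref{sumoftrace}).

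The main obstacle is that (\ref{sumoftrace}) only tests the alternating-derivative conditions along the diagonal ray $s_1=s_2=a/2$, whereas complete monotonicity of $g$ requires them at every point of $(0,\infty)^2$. Here I would use that (\ref{sumoftrace}), holding for all $k,m$ and all large $a$, says that, with $\sigma(dx)=x_1x_2\,\nu(dx)$ the (a priori signed) open-quadrant part and $t=a/2$,
\[
\int x_1^{k}x_2^{m}e^{-t(x_1+x_2)}\,\sigma(dx)\ge0\qquad(k,m\ge0,\ t\ge t_0).
\]
Differentiating in $t$ and expanding $(x_1+x_2)^{p}$ shows each map $t\mapsto\int x_1^{k}x_2^{m}e^{-t(x_1+x_2)}\sigma(dx)$ is completely monotone on $(t_0,\infty)$, so by Bernstein's theorem the push-forward of $x_1^{k}x_2^{m}\sigma$ under $(x_1,x_2)\mapsto x_1+x_2$ is a positive measure; equivalently $\int\chi(x_1+x_2)x_1^{k}x_2^{m}\,\sigma(dx)\ge0$ for every continuous $\chi\ge0$. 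Parametrising a diagonal line by $x_1=\theta u$, $x_2=(1-\theta)u$ turns $x_1^{k}x_2^{m}$ into $u^{k+m}\theta^{k}(1-\theta)^{m}$, and since nonnegative combinations of the Bernstein basis $\theta^{k}(1-\theta)^{m}$ approximate every nonnegative continuous function on $[0,1]$, these integrals detect positivity of $\sigma$ on each diagonal line, whence $\sigma\ge0$. This makes $g$ completely monotone and closes the equivalence. I expect the delicate points to be the justification of the preliminary reduction to $g$ (handling the axis parts of the Lévy measure so that the reconstructed exponent is genuinely $\Psi$) and the interchange of limits and summations needed in the Bernstein-approximation step.
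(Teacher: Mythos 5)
Your set-up coincides with the paper's: the Laplace transform $\det(I+2D_s\Sigma)^{-1/2}$, the expansion of the log-determinant into $\sum_{n\ge1}\trace\{(QS)^n\}/n$ around the point where the argument of the determinant equals $I+a\Sigma$, and the identification of the coefficient of $s_1^ks_2^m$ with a positive multiple of the left-hand side of (\ref{sumoftrace}) are exactly what the paper does, and your bookkeeping of the constant $k!\,m!\,2^{k+m-1}/((k+m)a^{k+m})$ is correct. The necessity direction is also fine, though proved differently: the paper uses $\log P=\lim_n n(P^{1/n}-1)$ together with termwise convergence of derivatives, while you read it off the L\'evy--Khintchine representation. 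The real divergence is in the sufficiency direction. The paper simply invokes \cite[Lemma 13.2.2]{MARCUS} (Lemma \ref{inflemma}), which states precisely that non-negativity, for all large $a$, of the power-series coefficients of $\log\psi(a(1-s_1),a(1-s_2))$ at $s=0$ already implies infinite divisibility; you instead attempt to prove this implication from scratch.

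That attempt has two genuine gaps. First, everything after your coefficient computation presupposes a signed (``quasi-'') L\'evy measure $\nu$ with $\Psi(s)=\langle b,s\rangle+\int(1-e^{-\langle s,x\rangle})\,\nu(dx)$ and enough integrability that $(-1)^{k+m-1}\partial_{s_1}^k\partial_{s_2}^m\Psi=\int x_1^kx_2^me^{-\langle s,x\rangle}\,\nu(dx)$. You never construct $\nu$; a general smooth Laplace exponent admits no such signed representation, so this must be extracted from the specific form of $\Psi$ --- or avoided altogether, which is what the cited lemma achieves by working directly with the power series. Second, the ``preliminary reduction'' to the open quadrant is not valid as stated: infinite divisibility of the marginals $Y_1,Y_2$ controls the \emph{projections} of $\nu$ onto the coordinate axes (i.e.\ the axis parts plus the projections of the quadrant part), not the axis-supported parts themselves, so positivity of $\nu$ on the open quadrant together with ID marginals does not yield $\nu\ge0$. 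This second gap is repairable inside your own scheme: the cases $k=0$ or $m=0$ of (\ref{sumoftrace}) reduce to $\trace Q_{11}^k\ge0$ and $\trace Q_{22}^m\ge0$ and hold automatically, and running your complete-monotonicity/Bernstein-basis argument on $\nu$ itself rather than on $x_1x_2\,\nu$ then gives positivity on the closed quadrant, axes included. The first gap is the substantive one, and it is exactly the work that the appeal to Lemma \ref{inflemma} is there to save.
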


\begin{remark}
By applying Theorem \ref{Tracethm} we can give a new and simple proof of Shanbhag's \cite{SHANBHAG} result that $(X_1^2,X_2^2+\dots + X_{1+n_2}^2)$ is infinite divisible. To see this, consider the case $n_1=1$ and $n_2 \in \NN$. Then $Q_{11}$ is a positive number and $Q_{12} Q_{22}^m Q_{21}$ is a non-negative number for any $m \in \NN$. In particular, we have 
\begin{align*}
\MoveEqLeft \trace Q_{11}^{k_1} Q_{12}Q_{22}^{m_1} Q_{21} \cdots Q_{12}Q_{22}^{m_{d}} Q_{21} Q_{11}^{k_{d+1}} \\
&= Q_{11}^{k_1} \cdots Q_{11}^{k_{d+1}} Q_{12}Q_{22}^{m_1} Q_{21} \cdots Q_{12} Q_{22}^{m_d} Q_{21} \geq 0
\end{align*}
for any $k_1,\dots, k_{d+1}, m_1, \dots , m_d \in \NN_0$. Consequently, the first sum in (\ref{sumoftrace}) is a sum of non-negative numbers. A similar argument gives that the other sum is non-negative too. We conclude that $(X_1^2,X_2^2+\dots + X_{1+n_2}^2)$ is infinite divisible. 
\end{remark}

In order to get a concise formulation of the following results we will need some terminology and conventions. To this end, consider a $2\times 2$ symmetric matrix $A$. Let $v_1$ and $v_2$ be the eigenvectors of $A$, and $\lambda_1$ and $\lambda_2$ be the corresponding eigenvalues. We say that $v_i$ is associated with the largest eigenvalue if $\lambda_i \geq \lambda_j$ for $j=1,2$. Furthermore, whenever $A $ is a multiple of the identity matrix, we fix $(1,0)$ to be the eigenvector associated with the largest eigenvalue.

Now consider the special case $n_1=n_2=2$, i.e., the vector 
\begin{align}\label{whatwewanttoinv2}
(X_1^2+X_2^2,X_3^2+X_4^2)
\end{align}
where $(X_1,X_2,X_3,X_4)$ is a mean zero Gaussian vector with a $4 \times 4$ positive definite covariance matrix $\Sigma$. We still let $Q = I - (I+a \Sigma)^{-1}$ and write 
\begin{align*}
Q = \begin{pmatrix}
Q_{11} & Q_{12} \\
Q_{21} & Q_{22}
\end{pmatrix}
\end{align*}
where $Q_{ij}$ is a $2 \times 2$ matrix for $i,j=1,2$. Let $W$ be a $(2,2)$-signature matrix such that
\begin{align*}
W^tQW = \begin{pmatrix}
W_1^t Q_{11} W_1 & W_1^t Q_{12}W_2 \\
W_2^tQ_{21}W_1 &W_2^t Q_{22}W_2 
\end{pmatrix} =  \begin{pmatrix}
q_{11} & 0 & q_{13} & q_{14} \\
0 & q_{22} &q_{23} &q_{24} \\
q_{13} & q_{23} &q_{33} & 0 \\
q_{14} &q_{24} & 0 & q_{44} 
\end{pmatrix},
\end{align*}
where $q_{11} \geq q_{22}>0$ and $q_{33} \geq q_{44}>0$ which exists by Lemma \ref{existssignmat}. Note that $q_{ij}$ is not the $(i,j)$-th entry of $Q$ but of $W^t Q W$. Let $v_1 = (v_{11},v_{21})$ be the eigenvector of $W_1^t Q_{12}Q_{21} W_1 $ associated with the largest eigenvalue. If $q_{11}=q_{22}$ or $q_{33}=q_{44}$, any orthogonal $W_1$ or $W_2$ gives the desired form. In this case, we may always choose $W_1$ or $W_2$ such that $v_{11} q_{13} ( v_{11} q_{13}+ v_{21} q_{23}) \geq 0$ (see the proof of Lemma \ref{Rotprop}, (ii) $\Rightarrow$ (iii)), and it is such a choice we fix. The following theorem addresses the non-negativity of the sums in (\ref{sumoftrace}) when $n_1=n_2=2$,.

\begin{theorem}\label{Suffcond}
Let $n_1=n_2=2$. Then, in the notation above, we have the following.
\begin{enumerate}
\item[(i)] For all $d \in \NN_0$ and $k_1,\dots,k_{d+1},m_1,\dots,m_d \in \NN_0$,
\begin{align*}
\trace Q_{11}^{k_1} Q_{12}Q_{22}^{m_1} Q_{21} Q_{11}^{k_1} \cdots Q_{11}^{k_{d} }Q_{12}Q_{22}^{m_{d}} Q_{21} Q_{11}^{k_{d+1}} \geq 0
\end{align*} 
if and only if $v_{11} q_{13} ( v_{11} q_{13}+ v_{21} q_{23}) \geq 0$. In particular, (\ref{whatwewanttoinv2}) is infinitely divisible if the latter inequality is satisfied for all sufficiently large $a$. 
\item[(ii)] For any $k,m \in \NN_0$ such that at least one of the following inequalities is satisfied: (i) $k \leq 2$, (ii) $m\leq 2$, or (iii) $k+m \leq 7$, the sum in (\ref{sumoftrace}) is non-negative. 
\end{enumerate}
\end{theorem}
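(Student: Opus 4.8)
The plan is to work throughout in coordinates adapted to the $(2,2)$-signature matrix $W$, exploiting that every trace appearing in (\ref{sumoftrace}) is unchanged when $Q$ is replaced by $W^tQW$. Indeed, $(W_1^tQ_{11}W_1)^{k}=W_1^tQ_{11}^{k}W_1$ and $W_1^tQ_{12}W_2$, $W_2^tQ_{21}W_1$, $(W_2^tQ_{22}W_2)^{m}=W_2^tQ_{22}^{m}W_2$, so inserting the identities $W_iW_i^t=I$ between consecutive blocks makes the whole product equal to $W_1^t(\cdots)W_1$, whose trace is $\trace(\cdots)$. Hence I may assume $Q_{11}=\diag(q_{11},q_{22})$ and $Q_{22}=\diag(q_{33},q_{44})$ with $q_{11}\ge q_{22}>0$ and $q_{33}\ge q_{44}>0$, and write $u=(q_{13},q_{23})$, $w=(q_{14},q_{24})$ for the columns of $Q_{12}$. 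In this normalization $v_1$ is simply the leading eigenvector of $Q_{12}Q_{21}=uu^t+ww^t$.

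The second step is a transfer-matrix expansion of a single summand. Since $Q_{22}^{m}=\diag(q_{33}^m,q_{44}^m)$, we have the rank-one decomposition $Q_{12}Q_{22}^{m}Q_{21}=q_{33}^m\,uu^t+q_{44}^m\,ww^t$. Substituting this into the trace in (i) and using cyclicity to absorb $Q_{11}^{k_{d+1}}$ into $Q_{11}^{k_1}$, the summand expands into a sum over two-colorings $r\in\{1,2\}^d$ of the $d$ rank-one factors; a short computation shows the resulting term is a product of strictly positive scalars $x^tQ_{11}^{\kappa}x$ with $x\in\{u,w\}$ together with one \emph{switch factor} $\phi_\kappa:=u^tQ_{11}^{\kappa}w=q_{11}^{\kappa}u_1w_1+q_{22}^{\kappa}u_2w_2$ for each place where the color changes. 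Since the colors run around a cycle the number of switches is always even, so the single trace is a positively weighted sum of terms of the form $\prod_i\phi_{\kappa_i}$ with an even number of factors.

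\emph{Part (i).} I would first record that the stated inequality is equivalent to $\cos\theta\,\cos\psi\,\cos(\theta-\psi)\ge0$, where $\theta,\psi$ are the angles of $v_1$ and of $u$; this form is invariant under the residual $\diag(\pm1,\pm1)$ freedom left in $W$ and is the convenient object to test. For ``only if'' I would show that when the inequality fails $\phi_\kappa$ changes sign as $\kappa$ grows, and then build an explicit summand (taking $d\ge3$ and sending suitable powers to infinity, so that $Q_{12}Q_{22}^mQ_{21}$ collapses onto the $u$-direction while the $Q_{11}^{\kappa}$ select competing diagonal entries) whose trace is strictly negative. The ``if'' direction is the crux: even when $\phi_\kappa$ is sign-indefinite, one must combine the even parity of switches, the positivity of the non-switch factors, and the Cauchy--Schwarz bound $\phi_\kappa^2\le(u^tQ_{11}^{\kappa}u)(w^tQ_{11}^{\kappa}w)$ coming from positive semidefiniteness of $Q_{21}Q_{11}^{\kappa}Q_{12}$, to force each summand to be non-negative. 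Since this bound holds regardless of the condition, the proof must in addition use the refined geometry encoded by $v_1$ rather than the naive sign of $\phi_\kappa$, and I expect this pairing/cancellation estimate to be the main obstacle. The final assertion of (i) is then immediate from Theorem~\ref{Tracethm}: if the inequality holds for all large $a$, every summand in the first sum of (\ref{sumoftrace}) is non-negative, and the symmetric argument handles the second sum, giving infinite divisibility.

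\emph{Part (ii).} Here the situation is genuinely different, because individual summands may be negative (precisely when the inequality of (i) fails), so one must establish non-negativity of the whole \emph{sum} in (\ref{sumoftrace}) through cancellation. The structural input is $d\le\min(k,m)$, which follows from the constraints $k_1+\dots+k_{d+1}+d=k$ and $m_1+\dots+m_d+d=m$. Thus $k\le2$ or $m\le2$ forces $d\in\{0,1,2\}$, and $k+m\le7$ forces $d\le3$, so in every listed case (\ref{sumoftrace}) reduces to a finite sum of short traces. These I would bound directly using the positive semidefiniteness of $Q_{12}Q_{22}^mQ_{21}$ and the determinant inequality $(Q_{12}Q_{22}^mQ_{21})_{11}(Q_{12}Q_{22}^mQ_{21})_{22}\ge(Q_{12}Q_{22}^mQ_{21})_{12}^2$, together with the duality identity $\trace Q_{11}^{\kappa_1}Q_{12}Q_{22}^{m_1}Q_{21}\cdots=\trace Q_{22}^{m_1}Q_{21}Q_{11}^{\kappa_2}Q_{12}\cdots$ that relates the two sums of (\ref{sumoftrace}). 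The regimes $k\le2$, $m\le2$ are light; the regime $k+m\le7$ is handled by the same tools but needs a heavier, essentially finite, case analysis over the admissible compositions, and the main difficulty there is the bookkeeping of showing that the positive diagonal contributions dominate the finitely many sign-indefinite cross terms.
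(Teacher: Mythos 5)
There is a genuine gap, and you have flagged it yourself: the ``if'' direction of part (i) is not proved. Your transfer-matrix expansion (writing $Q_{12}Q_{22}^{m}Q_{21}=q_{33}^{m}uu^t+q_{44}^{m}ww^t$ and summing over colorings with an even number of switch factors $\phi_\kappa$) correctly organizes the trace, but, as you note, the Cauchy--Schwarz bound $\phi_\kappa^2\le(u^tQ_{11}^{\kappa}u)(w^tQ_{11}^{\kappa}w)$ holds unconditionally and therefore cannot detect the condition $v_{11}q_{13}(v_{11}q_{13}+v_{21}q_{23})\ge 0$; the ``pairing/cancellation estimate'' you defer is exactly the missing theorem. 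The paper's route is structurally different and avoids any estimate on individual summands: Lemma~\ref{Rotprop} shows that the eigenvector inequality is equivalent to the existence of a genuinely \emph{orthogonal} (not merely $\diag(\pm1,\pm1)$) $(2,2)$-signature matrix $U$ such that $U^tQU$ has all entries non-negative --- constructed explicitly from the singular value decomposition of $Q_{12}$ and a sign analysis of the eigenequations --- after which every trace in (i) is a trace of a product of entrywise non-negative matrices and hence trivially non-negative. Without some substitute for this construction (or for the cancellation estimate you postpone), part (i) is not established. Your ``only if'' sketch is closer in spirit to the paper's limiting argument (the paper sends $k\to\infty$ in the normalized trace of $Q_{11}^kQ_{12}Q_{22}^kQ_{21}(Q_{12}Q_{21})^k$ to isolate exactly the quantity $v_{11}q_{13}(v_{11}q_{13}+v_{21}q_{23})$), and could likely be completed along those lines.

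Part (ii) also stops short of the proof in its hardest cases. Your reduction $d\le\min(k,m)$ and the resulting finiteness of the case analysis are correct, and the cases $k\le 2$ or $m\le 2$ do follow from positive semidefiniteness as you say. But for $\{k,m\}=\{3,3\}$ and $\{3,4\}$ the tools you list are not sufficient: the determinant inequality for $Q_{12}Q_{22}^{m}Q_{21}$ does not by itself dominate the cross term $-(\lambda_1+\lambda_2)(\lambda_3+\lambda_4)q_{13}q_{23}q_{14}q_{24}$. The paper's argument groups $\tfrac12\trace Q_{11}^2Q_{12}Q_{22}^2Q_{21}$ with $\trace Q_{11}Q_{12}Q_{22}Q_{21}Q_{12}Q_{21}$ and, crucially, invokes $\lambda_2\lambda_4\ge q_{24}^2$ --- a $2\times2$ principal-minor inequality of the \emph{full} positive definite matrix $Q$, not of $Q_{12}Q_{22}^{m}Q_{21}$ --- together with the reduction to the sign pattern where $q_{13}q_{14}\le q_{23}q_{24}$ and $q_{13}q_{23}\le q_{14}q_{24}$. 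That inequality is absent from your toolkit, and without it (or an equivalent) the dominance of the positive terms does not go through.
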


\begin{remark}
When $v_{11} q_{13} ( v_{11} q_{13}+ v_{21} q_{23}) < 0$, we know that there are $k, m \in \NN_0$ such that (\ref{sumoftrace}) with $n_1=n_2=2$ contains negative terms cf.\ Theorem~2.4~(i). If $k=0$ or $m=0$ then Theorem~\ref{Suffcond}~(ii) gives that the sum in (\ref{sumoftrace}) is non-negative. If $k, m \geq 1$, the sum in (\ref{sumoftrace}) always contains terms on the form
\begin{align}\label{postrace}
\trace Q_{11}^{k_1} Q_{12} Q_{22}^{m_1} Q_{21}.
\end{align}
Since $Q_{11}$ is positive definite and $\trace AB = \trace BA$ for any matrices $A$ and $B$ such that both sides make sense,
\begin{align*}
\trace  Q_{11}^{k_1} Q_{12} Q_{22}^{m_1} Q_{21} = \trace  Q_{11}^{k_1/2} Q_{12} Q_{22}^{m_1} Q_{21} Q_{11}^{k_1/2}.
\end{align*}
Using $Q_{12} =Q_{21}^t$ we conclude that (\ref{postrace}) is equal to the trace of a positive semi-definite matrix and therefore non-negative. Consequently, there are always non-negative terms in (\ref{sumoftrace}). 

It is an open problem if there exists a positive definite matrix $Q$ with eigenvalues less than $1$ and $k,m \in \NN_0$ such that (\ref{sumoftrace}) is negative, which would be an example of (\ref{whatwewanttoinv2}) not being infinite divisible, or if the non-negative terms always compensate for possible negative terms, which is equivalent to (\ref{whatwewanttoinv2}) always being infinitely divisible. 
\end{remark}

Continue to consider the case $n_1=n_2=2$ and write 
\begin{align*}
\Sigma^{-1} = \begin{pmatrix}
\Sigma^{11} & \Sigma^{12} \\
\Sigma^{21} & \Sigma^{22}
\end{pmatrix}
\end{align*} 
where $\Sigma^{ij}$ is a $2\times 2$ matrix for $i,j=1,2$. 
Let $W$ be a $(2,2)$-signature matrix such that 
\begin{align*}
W^t \Sigma^{-1} W = \begin{pmatrix}
W_1^t\Sigma^{11} W_1 & W_1^t \Sigma^{12} W_2 \\
W_2^t \Sigma^{21}W_1 & W_2^t \Sigma^{22} W_2 
\end{pmatrix} =  \begin{pmatrix}
\sigma_{11} & 0 & \sigma_{13} & \sigma_{14} \\
0 & \sigma_{22} & \sigma_{23} & \sigma_{24} \\
\sigma_{13} & \sigma_{23} & \sigma_{33} & 0 \\
\sigma_{14} & \sigma_{24} & 0 & \sigma_{44} 
\end{pmatrix}
\end{align*}
where $ \sigma_{11} \geq \sigma_{22}>0$ and $\sigma_{33} \geq \sigma_{44}>0$ which exists by Lemma \ref{existssignmat}. Note that $\sigma_{ij}$ is not the $(i,j)$-th entry of $\Sigma^{-1}$ but of $W^t \Sigma^{-1} W$. Let $v_1 = (v_{11},v_{21})$ be the eigenvector of $W_1^t \Sigma^{12}\Sigma^{21} W_1$ associated with the largest eigenvalue. If $\sigma_{11}=\sigma_{22}$ or $\sigma_{33}=\sigma_{44}$, any orthogonal $W_1$ or $W_2$ gives the desired form. In this case, we may chose $W_1$ or $W_2$ such that $ v_{21}\sigma_{24} ( v_{21} \sigma_{24}+ v_{11} \sigma_{14}) \geq 0$, and it is such a choice we fix. Then we have the following theorem.

\begin{theorem}\label{infcor}
The vector $(X_1^2+X_2^2,X_3^2+X_4^2)$ is infinitely divisible if one of the following equivalent conditions is satisfied.
\begin{enumerate}[(i)]
\item There exists a $(2,2)$-signature matrix $U$ such that $U^t \Sigma^{-1} U$ has non-positive off-diagonal elements. 
\item The inequality $ v_{21}\sigma_{24} ( v_{21} \sigma_{24}+ v_{11} \sigma_{14}) \geq 0$ holds. 
\end{enumerate}
\end{theorem}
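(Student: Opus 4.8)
The plan is to prove the equivalence of conditions (i) and (ii) and, separately, to deduce infinite divisibility from condition (i); since the two conditions are equivalent, this suffices for the full statement. I expect the passage from (i) to infinite divisibility to be short and to rest entirely on Theorem \ref{MarcusAndRosen}, whereas the equivalence (i) $\Leftrightarrow$ (ii) is where the real work lies and is the main obstacle.

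For the implication (i) $\Rightarrow$ infinite divisibility, the key observation is that the law of $(X_1^2+X_2^2,X_3^2+X_4^2)$ is invariant under $(2,2)$-signature transformations. Indeed, if $U=\diag(U_1,U_2)$ is a $(2,2)$-signature matrix and $Y=U^tX$, then $Y$ is mean zero Gaussian with covariance $U^t\Sigma U$ and, because $U_1,U_2$ are orthogonal,
\begin{align*}
Y_1^2+Y_2^2 = \Vert U_1^t(X_1,X_2)\Vert^2 = X_1^2+X_2^2, \qquad Y_3^2+Y_4^2 = X_3^2+X_4^2.
\end{align*}
Choosing $U$ as in condition (i), the inverse covariance of $Y$ is $U^t\Sigma^{-1}U$, which has non-positive off-diagonal elements by assumption. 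Applying Theorem \ref{MarcusAndRosen} with the $\pm1$ matrix equal to the identity then shows that $(Y_1^2,Y_2^2,Y_3^2,Y_4^2)$ is infinitely divisible. Since infinite divisibility is preserved under linear maps, its image under $(a,b,c,d)\mapsto(a+b,c+d)$, namely $(Y_1^2+Y_2^2,Y_3^2+Y_4^2)=(X_1^2+X_2^2,X_3^2+X_4^2)$, is infinitely divisible as well.

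It remains to establish the equivalence (i) $\Leftrightarrow$ (ii). Since $(2,2)$-signature matrices form a group and only the existence of $U$ matters in (i), I would first replace $\Sigma^{-1}$ by $W^t\Sigma^{-1}W$ for the signature matrix $W$ fixed before the theorem: reparametrizing $U=WU'$ leaves the set of attainable matrices $U^t\Sigma^{-1}U$ unchanged, so I may assume $\Sigma^{-1}$ is already in the displayed canonical form with diagonal blocks $\diag(\sigma_{11},\sigma_{22})$ and $\diag(\sigma_{33},\sigma_{44})$ and off-diagonal block $M=\begin{pmatrix}\sigma_{13}&\sigma_{14}\\\sigma_{23}&\sigma_{24}\end{pmatrix}$. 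Condition (i) then becomes the feasibility question: do there exist orthogonal $2\times2$ matrices $R_1,R_2$ such that $R_1^t\diag(\sigma_{11},\sigma_{22})R_1$ and $R_2^t\diag(\sigma_{33},\sigma_{44})R_2$ have non-positive off-diagonal entries while all four entries of $R_1^tMR_2$ are non-positive?

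I would resolve this feasibility problem by parametrizing $R_1,R_2$ through rotation angles (and the finitely many reflection cases) and tracking the constraints trigonometrically. The off-diagonal conditions on the two diagonal blocks confine each angle to an explicit arc, and on that range the four sign constraints on $R_1^tMR_2$ should collapse, using the ordering conventions $\sigma_{11}\geq\sigma_{22}$ and $\sigma_{33}\geq\sigma_{44}$ together with the structure of $M$, to a single scalar inequality. The geometric object governing this reduction is the top eigenvector $v_1=(v_{11},v_{21})$ of $MM^t=W_1^t\Sigma^{12}\Sigma^{21}W_1$, i.e.\ the leading left singular vector of $M$, and the claim is that the reduced inequality is precisely $v_{21}\sigma_{24}(v_{21}\sigma_{24}+v_{11}\sigma_{14})\geq0$, which is condition (ii). The degenerate cases $\sigma_{11}=\sigma_{22}$ or $\sigma_{33}=\sigma_{44}$, where $W_1$ or $W_2$ is not pinned down, must be handled through the tie-breaking convention fixed before the theorem so that (ii) is well defined; the careful bookkeeping of signs across the reflection cases is where I anticipate the argument to be most delicate.
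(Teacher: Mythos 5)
Your first half is correct and matches the paper: the invariance of the law of $(X_1^2+X_2^2,X_3^2+X_4^2)$ under $(2,2)$-signature transformations of $X$, combined with Theorem \ref{MarcusAndRosen}, is exactly how the paper's Corollary \ref{infcor1} deduces infinite divisibility from condition (i).

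The gap is in the equivalence (i) $\Leftrightarrow$ (ii), which is the substantive content of the theorem. You correctly reduce it to a feasibility question for a pair of orthogonal matrices $R_1,R_2$, but then you only assert that the sign constraints ``should collapse'' to the inequality in (ii); that collapse \emph{is} the theorem, and you give no argument for it in either direction. The paper's route (Corollary \ref{Rotpropcor} via Lemma \ref{Rotprop}) shows why both directions require real work and is quite different from your proposed trigonometric parametrization. For ``(ii) $\Rightarrow$ (i)'' it constructs the orthogonal matrices explicitly: after reducing the off-diagonal block to a canonical sign pattern with exactly one negative entry, it splits into two cases according to the sign of $\sigma_{13}\sigma_{23}-\sigma_{14}\sigma_{24}$, in one case writing down $U_2$ directly in terms of the entries, and in the other taking the two orthogonal factors of the singular value decomposition of the off-diagonal block and using sign information from the eigenequations to check that every required entry has the right sign. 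For ``(i) $\Rightarrow$ (ii)'' it does not analyze the constraint region at all; it passes through an intermediate condition (non-negativity of the traces of all words $A_{11}^{k_1}A_{12}A_{22}^{m_1}A_{21}\cdots$) and extracts the scalar inequality by normalizing such a trace and letting the exponents tend to infinity, so that only the top eigenvector $v_1$ survives in the limit. Your plan might be completable, but as written the central equivalence is a statement of intent rather than a proof: you have not verified that the angular arcs forced by the diagonal-block constraints are compatible with the four sign constraints on $R_1^tMR_2$ precisely when $v_{21}\sigma_{24}(v_{21}\sigma_{24}+v_{11}\sigma_{14})\geq 0$, and the necessity direction is not addressed at all.
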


Example \ref{seconddeltaepsilonexample} builds intuition about condition (ii) above, in particular that the condition holds in cases where $(X_1^2,X_2^2,X_3^2,X_4^2)$ is not infinitely divisible, but also that it is not always satisfied. 

Theorem 2.6 (i) holds for general $n_1,n_2 \geq 1$ as the following result shows. We give the proof below since it is short and makes the need for signature matrices clear. The proof of the more applicable condition (ii) in Theorem \ref{infcor} is postponed to Section 4 since it relies on results that will be establish in that section. 

\begin{corollary}[to Theorem \ref{MarcusAndRosen}]\label{infcor1}
Let $(X_1,\dots,X_{n_1+n_2})$ be a mean zero Gaussian vector with positive definite covariance matrix $\Sigma$. Then 
\begin{align}\label{whatweareinterestedin}
(X_1^2+\dots +X_{n_1}^2,X_{n_1 +1}^2 +\dots +X_{n_1+n_2}^2)
\end{align}
is infinitely divisible if there exists an $(n_1,n_2)$-signature matrix $U$ such that $U^t \Sigma^{-1} U$ has non-positive off-diagonal elements. 
\end{corollary}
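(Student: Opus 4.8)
The plan is to reduce the statement to Theorem~\ref{MarcusAndRosen} by exploiting two elementary facts: first, that the block-orthogonal structure of a signature matrix preserves the two sums of squares in (\ref{whatweareinterestedin}); and second, that infinite divisibility is preserved under linear maps. The role of the signature matrix is precisely to diagonalize away off-diagonal entries within each block without disturbing the partition of the coordinates into the two groups.

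Let $U$ be the $(n_1,n_2)$-signature matrix provided by the hypothesis, write $X = (X_1,\dots,X_{n_1+n_2})^t$, and set $Y = U^t X$. Since $U$ is block-diagonal with orthogonal blocks $U_1$ and $U_2$, the first $n_1$ coordinates of $Y$ equal $U_1^t (X_1,\dots,X_{n_1})^t$ and the last $n_2$ equal $U_2^t (X_{n_1+1},\dots,X_{n_1+n_2})^t$. Because $U_1$ and $U_2$ are orthogonal they preserve Euclidean norms within each block, so
\begin{align*}
\sum_{i=1}^{n_1} Y_i^2 = \sum_{i=1}^{n_1} X_i^2 \quad \text{and} \quad \sum_{i=n_1+1}^{n_1+n_2} Y_i^2 = \sum_{i=n_1+1}^{n_1+n_2} X_i^2.
\end{align*}
Hence the target vector (\ref{whatweareinterestedin}) is literally unchanged when $X$ is replaced by $Y$. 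This is exactly where block structure matters: a general orthogonal change of variables would mix the two groups and alter which squares enter which component.

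Next I would identify the law of $Y$. It is mean zero Gaussian with covariance $U^t \Sigma U$, whose inverse is $(U^t \Sigma U)^{-1} = U^t \Sigma^{-1} U$ by orthogonality of $U$. By hypothesis this matrix has non-positive off-diagonal elements, so applying Theorem~\ref{MarcusAndRosen} to $Y$ with the trivial diagonal signature matrix $\diag(1,\dots,1)$ shows that $(Y_1^2,\dots,Y_{n_1+n_2}^2)$ is infinitely divisible. Finally, the map sending $(z_1,\dots,z_{n_1+n_2})$ to $(z_1 + \dots + z_{n_1},\, z_{n_1+1} + \dots + z_{n_1+n_2})$ is linear, and infinite divisibility is stable under linear images (immediate from the characteristic-function definition, since the characteristic function of a linear image $TV$ satisfies $\varphi_{TV}(\theta) = \varphi_V(T^t\theta)$). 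Applying this to $(Y_1^2,\dots,Y_{n_1+n_2}^2)$ and invoking the identity above yields infinite divisibility of (\ref{whatweareinterestedin}).

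I do not anticipate a serious technical obstacle, as the argument is short. The only point requiring genuine care is the conceptual one: recognizing that $(n_1,n_2)$-signature matrices form the correct invariance group. Block-diagonal orthogonality is precisely strong enough to invoke Theorem~\ref{MarcusAndRosen} after rotating within each block, while being weak enough to leave the two sums of squares, and therefore the vector (\ref{whatweareinterestedin}), invariant.
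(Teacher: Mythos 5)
Your proposal is correct and follows essentially the same route as the paper: replace $X$ by $U^tX$, observe that the block-orthogonal structure leaves both sums of squares unchanged while turning the inverse covariance into $U^t\Sigma^{-1}U$, and then invoke Theorem~\ref{MarcusAndRosen}. The only difference is cosmetic — you spell out explicitly the final step that a linear image (summing the squares within each block) of an infinitely divisible vector is infinitely divisible, which the paper leaves implicit.
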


\begin{proof}\label{proofofcor}
Write $X= (X_1,\dots, X_{n_1})$ and $Y=(X_{n_1 + 1},\dots, X_{n_1+n_2})$, and note that 
\begin{align}\label{invariance}
\begin{aligned}
 (X_1^2+\dots +X_{n_1}^2,X_{n_1+1}^2+\dots +X_{n_1+n_2}^2) & = (\Vert X\Vert ^2, \Vert Y \Vert^2) \\
 & = (\Vert U_1X\Vert ^2, \Vert U_2Y \Vert^2)
 \end{aligned}
\end{align} 
for any $n_1  \times n_1 $ orthogonal matrix $U_1$ and $n_2 \times n_2$ orthogonal matrix $U_2$. Consequently, any property of the distribution of (\ref{whatweareinterestedin}) is invariant under transformations of the form
\begin{align*}
\begin{pmatrix}
U_1^t & 0 \\
0 & U_2^t
\end{pmatrix} \Sigma \begin{pmatrix}
U_1 & 0 \\
0 & U_2
\end{pmatrix} 
\end{align*}
of the covariance matrix $\Sigma$. Therefore, when there exists an $(n_1,n_2)$-signature matrix $U$ such that $U^t \Sigma^{-1} U$ has non-positive off-diagonal elements, Theorem~ \ref{MarcusAndRosen} ensures infinite divisibility of (\ref{invariance}).  
\end{proof}

\section{Examples and numerics}

We begin this section by presenting two examples treating the inequalities in Theorem \ref{Tracethm} (ii) and Theorem \ref{infcor} (ii) in special cases. Then we calculate the sums in Theorem \ref{Tracethm} numerically with $n_1 =n_2 =2$ for a specific value of $Q$ for $k$ and $m$ less than $60$. 

\begin{example}\label{firstdeltaepsiloneksempel}
Fix $a >0$ and assume that $Q$ is on the form
\begin{align*}
Q = \begin{pmatrix}
Q_{11} & Q_{12} \\
Q_{21} & Q_{22}
\end{pmatrix} = \begin{pmatrix}
q_1 & 0 & \varepsilon   & \varepsilon \\
0 & q_2 & \varepsilon &-\delta \\
\varepsilon & \varepsilon & q_3 & 0 \\
\varepsilon & - \delta  & 0 & q_4 
\end{pmatrix}
\end{align*}
where $\delta, \varepsilon > 0$, $q_1 > q_2>0$, and $q_3 > q_4>0$. Let $v_1= (v_{11},v_{21})$ be the eigenvector of 
\begin{align*}
Q_{12} Q_{21} = \begin{pmatrix}
2 \varepsilon^2 & \varepsilon ( \varepsilon - \delta) \\
\varepsilon ( \varepsilon - \delta) & \varepsilon^2 + \delta^2 
\end{pmatrix}
\end{align*}
associated with the largest eigenvalue $\lambda_1$. We will argue that the inequality in Theorem \ref{Suffcond} (i), which reads  
\begin{align}\label{ineq}
v_{11}  (v_{11} +  v_{21} ) \geq 0
\end{align}
in this case, holds if and only if $\delta \leq \varepsilon$. Then the same theorem will imply that 
\begin{align*}
\trace\: Q_{11}^{k_1} Q_{12}Q_{22}^{m_1} Q_{21} Q_{11}^{k_1} \cdots Q_{11}^{k_{d} }Q_{12}Q_{22}^{m_{d}} Q_{21} Q_{11}^{k_{d+1}} \geq 0
\end{align*} 
for all $d \in \NN_0$ and $k_1,\dots, k_{d+1}m_1,\dots,m_d \in \NN_0$ if and only if $\delta \leq \varepsilon$, and therefore also that the sum in (\ref{sumoftrace}) is non-negative whenever this is the case. 

Since $-v_1$ also is an eigenvector of $Q_{12} Q_{21}$ associated with the largest eigenvalue, we assume $v_{11} \geq 0$ without loss of generality. Assume $\delta \leq \varepsilon$. If $\delta = \varepsilon$, $v_1 = (1,0)$ and the inequality in (\ref{ineq}) holds. Assume $\delta < \varepsilon$. Since $\lambda_1$ is the largest eigenvalue, 
\begin{align*}
\lambda_1 = \sup_{\vert v \vert =1} v^tQ_{12}Q_{21} v  \geq 2 \varepsilon^2
\end{align*}
which implies that 
\begin{align*}
 2 \varepsilon^2 - \lambda_1  \leq 0 \leq \varepsilon ( \varepsilon - \delta).
\end{align*}
Since $v_1$ is an eigenvector, $(Q - \lambda_1) v_1 = 0$ and we therefore have that 
\begin{align*}
0 = ( 2 \varepsilon^2 - \lambda_1) v_{11} +  \varepsilon (  \varepsilon -\delta ) v_{21} \leq \varepsilon ( \varepsilon - \delta) (v_{11} + v_{21} ).
\end{align*}
We conclude that (\ref{ineq}) holds. 

On the other hand, assume $\delta > \varepsilon$ and $v_{11} \geq 0$. Since $\lambda_1$ is the largest eigenvalue, $\lambda_1  \geq \delta^2 + \varepsilon^2 > \delta \varepsilon + \varepsilon^2$ and therefore,
\begin{align*}
(\lambda_1 - 2 \varepsilon^2) > \varepsilon (  \delta -\varepsilon).
\end{align*}
Note that $v_{11}$ can not be zero since the off-diagonal element in $Q_{12}Q_{21}$ is non-zero. We conclude that 
\begin{align*}
0 = ( \lambda_1 - 2 \varepsilon^2)v_{11}   +\varepsilon ( \delta - \varepsilon)  v_{21}  >  \varepsilon ( \delta - \varepsilon)(v_{11} + v_{21}).
\end{align*}
This implies that (\ref{ineq}) does not hold.

\end{example}

\begin{example}\label{seconddeltaepsilonexample}
Assume $\Sigma^{-1}$ is on the form 
\begin{align}\label{deltaepsilonform}
\Sigma^{-1} = \begin{pmatrix}
\Sigma^{11} & \Sigma^{12} \\
\Sigma^{21} & \Sigma^{22}
\end{pmatrix} =\begin{pmatrix}
\sigma_{1} & 0 & -\delta   & \varepsilon \\
0 & \sigma_{2} & \varepsilon &\varepsilon \\
-\delta & \varepsilon & \sigma_{3} & 0 \\
\varepsilon & \varepsilon & 0 & \sigma_{4} 
\end{pmatrix}
\end{align}
where $\sigma_1 >  \sigma_2 >0$, $\sigma_3 > \sigma_4 >0$, and $\delta,\varepsilon > 0$. Let $v_1 = (v_{11},v_{21})$ be the eigenvector of $\Sigma^{12} \Sigma^{21}$ associated with the largest eigenvalue. We will argue that the inequality in Theorem \ref{infcor} (ii) holds if and only if $\delta \leq \varepsilon$. Then the same theorem implies that $(X_1^2+X_2^2,X_3^2+X_4^2)$ is infinitely divisible whenever $\delta \leq \varepsilon$. On the other hand, Theorem \ref{MarcusAndRosen} implies that $(X_1^2,X_2^2,X_3^2,X_4^2)$ is never infinite divisible under (\ref{deltaepsilonform}) since there does not exists a matrix $D$ on the form $\diag(\pm 1,\pm 1, \pm 1, \pm 1)$ such that $D \Sigma^{-1}D$ has non-positive off-diagonal elements. Indeed, for any two matrices $D_1$ and $D_2$ on the form $\diag (\pm 1, \pm 1)$, $D_1 \Sigma^{12} D_2$ has either three negative and one positive or one negative and three positive entrances. 

To see that $v_{21} (v_{11} + v_{21}) \geq 0$ if and only if $\delta \leq \varepsilon$, let 
\begin{align*}
P =\begin{pmatrix}
0 & 1 \\
1 & 0
\end{pmatrix}
\end{align*}
and $Q_{12}$ be given as in Example \ref{firstdeltaepsiloneksempel}. Then $P \Sigma^{12} P = Q_{12}$, implying that $(v_{21},v_{11})$ is the eigenvector associated with the largest eigenvalue of $Q_{12} Q_{21}$. We have argued in Example \ref{firstdeltaepsiloneksempel} that $v_{21}  ( v_{11} + v_{21}) \geq 0$ holds if and only if $\delta \leq \varepsilon$ which is the desired conclusion. 
\end{example}

Now we investigate infinite divisibility of $(X_1^2+X_2^2,X_3^2+X_4^2)$ numerically. More specifically, we consider the sums in (\ref{sumoftrace}) with $n_1=n_2=2$ for a specific choice of positive definite matrix and different values of $k$ and $m$. We will scale $Q$ to have its largest eigenvalue equal to one to avoid getting too close to zero. Due to Theorem \ref{Suffcond} the case where $v_{11} q_{13} ( v_{11} q_{13}+ v_{21} q_{23})<0$ (in the notation from Theorem \ref{Suffcond}) is the only case where the infinite divisibility of $(X_1^2+X_2^2,X_3^2+X_4^2)$ is open.

\normalsize
Let 
\begin{align*}
Q = \frac{1}{\lambda}\begin{pmatrix}
0.8 & 0 & 0.01 & 0.01 \\
0 & 0.3 & 0.01 &-0.2  \\
 0.01& 0.01 & 0.8 & 0 \\
0.01 & -0.2 & 0 & 0.3
\end{pmatrix} 
\end{align*}
where $\lambda>0$ is chosen such that $Q$ has its largest eigenvalue equal to $1$. Note that by Example \ref{firstdeltaepsiloneksempel}, $v_{11} q_{13} ( v_{11} q_{13}+ v_{21} q_{23})<0$. In Figure \ref{fig2} the logarithm of the sums in (\ref{sumoftrace}) for $k$ and $m$ between $0$ and $60$ is plotted. It is seen that the logarithm seems stable and therefore, that the sums in (\ref{sumoftrace}) remain positive in this case. A similar analysis have been done for other positive definite matrices, and we have not encountered any $k,m\in \NN_0$ such that (\ref{sumoftrace}) is negative. This, together with Theorem \ref{Suffcond} (ii), leads us to conjecture that $(X_1^2+X_2^2,X_3^2+X_4^2)$ is infinite divisible for any covariance structure of $(X_1,X_2,X_3,X_4)$. 

\begin{figure}[h]
\centering
\includegraphics[scale=.15]{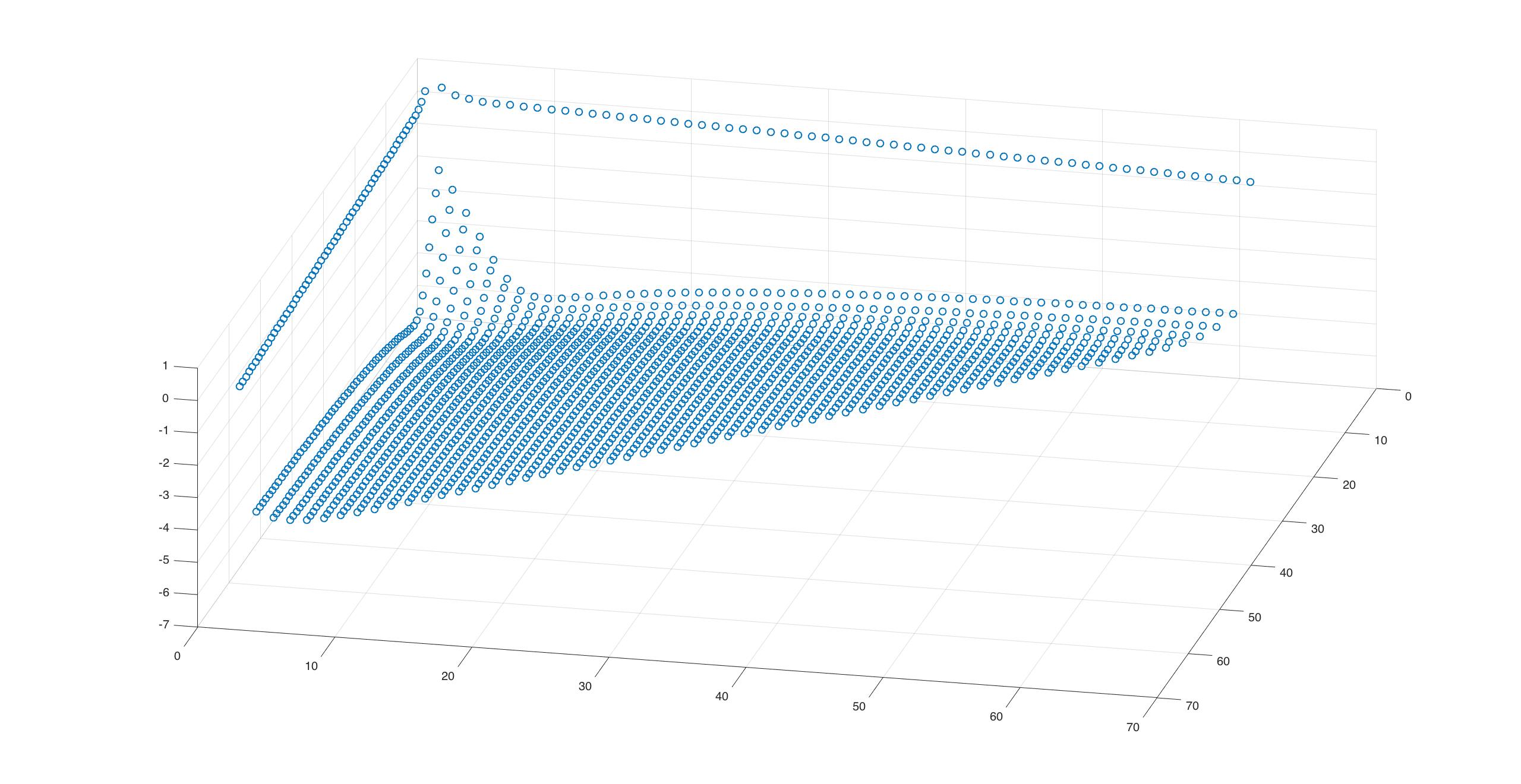}
\caption{The logarithm of the sums in (\ref{sumoftrace}) for $k$ and $m$ between $0$ and $60$.}\label{fig2}
\end{figure}


\section{Proofs}

We start this section with two lemmas on linear algebra. Lemma~\ref{Rotprop} will be very useful in the proofs that make up the rest of this section.

\begin{lemma}\label{existssignmat}
Let $A$ be a $n \times n$ positive definite matrix. Let $n_1,n_2 \in \NN$ be such that $n_1+ n_2=n$ and write 
\begin{align*}
A = \begin{pmatrix}
A_{11} & A_{12} \\
A_{21} & A_{22}
\end{pmatrix}
\end{align*}
where $A_{11}$ is an $n_1 \times n_1$ matrix, $A_{22}$ is an $n_2 \times n_2$ matrix, and $A_{12} = A_{21}^t$ is an $n_1 \times n_2$ matrix. Then there exists an $(n_1,n_2)$-signature matrix $W$ such that $W^t A W$ has the form
\begin{align*}
\begin{pmatrix}
\tilde{A}_{11} & \tilde{A}_{12} \\
\tilde{A}_{21} & \tilde{A}_{22}
\end{pmatrix}
\end{align*}
where $\tilde{A}_{11}= \diag(a_1,\dots,a_{n_1})$ and $\tilde{A}_{22}= \diag(a_{n_1+1},\dots,a_{n_1+n_2})$ with $a_i > 0$ for $i=1,\dots, n_1+n_2$, and where $\tilde{A}_{12}= \tilde{A}_{21}^t$. Furthermore, we may choose $W$ such that $a_1 \geq a_2 \geq \dots \geq a_{n_1}$ and $a_{n_1+1} \geq a_{n_1+2} \geq \dots \geq a_{n_1+n_2}$. 
\end{lemma}

\begin{proof}
Since $A$ is positive definite, $A_{11}$ and $A_{22}$ are positive definite. Consequently, by the spectral theorem (see for example \cite[Corollary 6.4.7]{LEON}), there exists an $n_1 \times n_1$ matrix $W_1$ and an $n_2 \times n_2$ matrix $W_2$, both orthogonal, such that $W_1^t A_{11} W_1$ and $W_2^t A_{22} W_2$ are diagonal with positive diagonal entries. Since permutation matrices are orthogonal matrices, we may assume the diagonal is ordered by size in both $W_1^t A_{11} W_1$ and $W_2^t A_{22} W_2$. Consequently, letting
\begin{align*}
W =\begin{pmatrix}
W_1 & 0 \\
0 & W_2
\end{pmatrix},
\end{align*}
implies that $W^t A W$ has the right form. 
\end{proof}

For a fixed eigenvector $v_i$ we call the system $A v_i = \lambda_i v_i$, the system of eigenequations. The $k$'th equation in this system will be called the $k$'th eigenequation associated with $v_i$. 

Let $A$ be a $4 \times 4$ positive definite matrix, and let $W$ be a $(2,2)$-signature such that 
\begin{align*}
 W^t A W  = \begin{pmatrix}
 W_1^t A_{11} W_1 & W_1^t A_{12} W_2 \\
 W_2^t A_{21} W_1 & W_2^t A_{22} W_2
 \end{pmatrix} = \begin{pmatrix}
 a_{11} & 0 & a_{13} & a_{14} \\
 0 & a_{22} & a_{23} & a_{24} \\
a_{13} & a_{23} & a_{33} & 0 \\
a_{14} & a_{24} & 0 & a_{44} 
\end{pmatrix},
\end{align*}
where $a_{11} \geq a_{22} > 0$ and $a_{33} \geq a_{44} >0$ which exists by Lemma \ref{existssignmat}. Note that $a_{ij}$ is not the $(i,j)$-th entry of $A$ but of $W^t A W$. Let $v_1= (v_{11},v_{21})$ be the eigenvector associated with the largest eigenvalue of $W_1^t A_{12}A_{21}W_1$. If $a_{11}=a_{22}$ or $a_{33}=a_{44}$, any orthogonal $W_1$ or $W_2$ give the desired form. In this case, we may chose $W_1$ or $W_2$ such that $v_{11} a_{13} (v_{11}  a_{13}  + v_{21}  a_{23} ) \geq 0$, and it is such a choice we fix. Then the lemma below will play a central role in the proofs of the previously stated results.

\begin{lemma}\label{Rotprop}
In the notation above, the following are equivalent. 
\begin{enumerate}[(i)]
\item There exists a $(2,2)$-signature matrix $U$ such that $U^t A U $ has all entries non-negative.  
\item For any $d \in \NN$ and $k_1, \dots, k_{d+1}, m_1, \dots m_d \in \NN_0$,
\begin{align*}
\trace A_{11}^{k_1} A_{12} A_{22}^{m_1} A_{21} A_{11}^{k_2} \cdots  A_{11}^{k_d} A_{12} A_{22}^{m_d} A_{21} A_{11}^{k_{d+1}} \geq 0.
\end{align*} 
\item  The inequality $v_{11} a_{13} (v_{11}  a_{13}  + v_{21}  a_{23} ) \geq 0$ holds.
\end{enumerate}
\end{lemma}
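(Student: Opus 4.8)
The plan is to establish the cycle (i) $\Rightarrow$ (ii) $\Rightarrow$ (iii) $\Rightarrow$ (i). First I would reduce to the case where $A$ already has the displayed form, so that $A_{11}=\diag(a_{11},a_{22})$, $A_{22}=\diag(a_{33},a_{44})$ and $A_{12}=\left(\begin{smallmatrix} a_{13}&a_{14}\\ a_{23}&a_{24}\end{smallmatrix}\right)$: replacing $A$ by $W^tAW$ affects none of the three conditions, since conjugation by a $(2,2)$-signature matrix sends signature matrices to signature matrices (for (i) and (iii)), and, by telescoping the orthogonal factors $W_1,W_2$ under the cyclic trace, leaves every expression in (ii) unchanged. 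I write $D=A_{11}$, $c_3=(a_{13},a_{23})^t$, $c_4=(a_{14},a_{24})^t$, and record $A_{12}A_{22}^mA_{21}=a_{33}^mc_3c_3^t+a_{44}^mc_4c_4^t=:B_m$. The key observation is that, with $e_1=(1,0)^t$ (the leading eigenvector of both $A_{11}$ and $A_{22}$ in block coordinates) and $c_3=A_{12}e_1$, condition (iii) is exactly the triangle sign condition $\langle v_1,e_1\rangle\,\langle e_1,c_3\rangle\,\langle v_1,c_3\rangle\ge 0$, because $v_{11}=\langle v_1,e_1\rangle$, $a_{13}=\langle e_1,c_3\rangle$ and $v_{11}a_{13}+v_{21}a_{23}=\langle v_1,c_3\rangle$.

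For (i) $\Rightarrow$ (ii), the invariance above shows that the trace in (ii) equals the same alternating product formed from the blocks of $U^tAU$. If $U$ is the signature matrix furnished by (i), those blocks are entrywise non-negative, hence so is their product, and its trace is a sum of non-negative numbers.

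For (ii) $\Rightarrow$ (iii) I would argue by contraposition with a limiting probe. A run of factors with $k_j=m_j=0$ builds a power $B_0^N$, a single factor with $m_j=M$ builds $B_M$, and a factor with $k_j=K$ builds $D^K$; as $N,K,M\to\infty$ these converge after normalisation to the rank-one projections $v_1v_1^t$, $\hat c_3\hat c_3^t$ and $e_1e_1^t$, provided $a_{11}>a_{22}$, $a_{33}>a_{44}$ and the top eigenvalue of $B_0$ is simple (in the remaining cases $B_0$ is scalar or the conventions fixing $W_1,W_2$ apply, so (iii) holds and there is nothing to prove). Arranging the cyclic pattern $B_0^N,D^K,B_M$ and using $\trace(z_1z_1^tz_2z_2^tz_3z_3^t)=\langle z_1,z_2\rangle\langle z_2,z_3\rangle\langle z_3,z_1\rangle$, the trace is to leading order a positive multiple of $\langle v_1,e_1\rangle\langle e_1,c_3\rangle\langle c_3,v_1\rangle$. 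If (iii) fails this is strictly negative, so for $N,K,M$ large the genuine trace is negative, contradicting (ii).

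For (iii) $\Rightarrow$ (i) I would construct $U_1,U_2\in O(2)$ rendering every entry of $U^tAU$ non-negative. Since the diagonal entries of the diagonal blocks stay positive, the only constraints are $u_1^tA_{11}u_2\ge 0$, $u_3^tA_{22}u_4\ge 0$ (each confining a rotation angle to a half-line) together with the four inequalities $u_i^tA_{12}u_j\ge 0$ for the off-diagonal block. Guided by the singular value decomposition of $A_{12}$ and the relative positions of $v_1$, $e_1$ and $c_3$, one chooses the two angles within their admissible ranges, together with the residual reflections, so that all four off-diagonal entries become non-negative, and the triangle sign condition (iii) is precisely what makes this simultaneously possible; the degenerate cases $a_{11}=a_{22}$ or $a_{33}=a_{44}$ are absorbed by the freedom in choosing $W_1,W_2$. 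I expect this constructive feasibility step, with its case analysis keyed to the sign of the triangle product, to be the main obstacle, the other two implications being short once (iii) is identified with the triangle sign and the projection limits are set up.
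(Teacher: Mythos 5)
Your reduction to the block-diagonalised form, your identification of condition (iii) with the triangle sign $\langle v_1,e_1\rangle\langle e_1,c_3\rangle\langle v_1,c_3\rangle\ge 0$, and your arguments for (i) $\Rightarrow$ (ii) and (ii) $\Rightarrow$ (iii) are all correct and essentially the paper's own: the paper's probe is $\trace A_{11}^kA_{12}A_{22}^kA_{21}(A_{12}A_{21})^k$ normalised by $a_{11}^ka_{33}^k\lambda_1^k$, which is a one-parameter version of your $\trace(B_0^ND^KB_M)$ and converges to the same product of rank-one projections. The degenerate cases ($\lambda_1=\lambda_2$, $a_{11}=a_{22}$, $a_{33}=a_{44}$) are disposed of by the fixed conventions exactly as you indicate.

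The genuine gap is in (iii) $\Rightarrow$ (i), which you yourself flag as ``the main obstacle'' and then leave as a plan rather than a proof. The assertion that ``the triangle sign condition (iii) is precisely what makes this simultaneously possible'' is the entire content of this implication; nothing in your sketch shows that the admissible angle ranges for the two diagonal-block constraints intersect the region where all four entries of $U_1^tA_{12}U_2$ are non-negative. The paper's construction, which you would need to supply, runs as follows. First use diagonal sign matrices to reduce to the case where $A_{12}$ has exactly one negative entry, which may be placed at position $(2,2)$, so $A_{12}=\bigl(\begin{smallmatrix}a_{13}&a_{14}\\a_{23}&-a_{24}\end{smallmatrix}\bigr)$ with all four numbers positive. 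Then split on the sign of $a_{13}a_{23}-a_{14}a_{24}$. If it is non-negative, take $U_1=I$ and the explicit orthogonal matrix $U_2$ with columns proportional to $(a_{14}a_{24}/a_{23},\,a_{14})$ and $(a_{23},\,-a_{24})$; a direct computation shows $A_{12}U_2$ and $U_2^tA_{22}U_2$ are entrywise non-negative (this case does not even use (iii)). If it is negative, take $U_1=V$ and $U_2=A_{21}V\Lambda^{-1/2}$ from the singular value decomposition of $A_{12}$, normalise $V$ so that $v_{11}$ and $v_{12}$ share a sign, and use the first eigenequation for the \emph{smallest} eigenvalue of $A_{12}A_{21}$ (whose off-diagonal coefficient is exactly $a_{13}a_{23}-a_{14}a_{24}<0$) to deduce that $v_{12}$ and $v_{22}$ share a sign; only then does hypothesis (iii) enter, to show $w_{11}w_{12}=\lambda_1^{-1/2}\lambda_2^{-1/2}(v_{11}a_{13}+v_{21}a_{23})(v_{12}a_{13}+v_{22}a_{23})\ge 0$, which makes the off-diagonal entries $w_{11}w_{12}(a_{33}-a_{44})$ of $U_2^tA_{22}U_2$ non-negative. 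Without this case analysis and sign-tracking, the implication is unproved. (A minor additional imprecision: the constraint $u_1^tA_{11}u_2\ge 0$ confines the rotation angle to half the circle, a union of two quarter-arcs, not a half-line.)
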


\begin{proof}
(i) $\Rightarrow$ (ii).  Let 
\begin{align*}
U = \begin{pmatrix}
U_1 & 0 \\
0 & U_2
\end{pmatrix}
\end{align*}
be such that $B_{ij} = U_i^t A_{ij}U_j$ has non-negative entries for $i,j =1,2$. Then 
\begin{align*}
\MoveEqLeft \trace A_{11}^{k_0} A_{12}A_{22}^{m_1} A_{21} A_{11}^{k_1}\cdots A_{11}^{k_{d-1} }A_{12}A_{22}^{m_{d}} A_{21} A_{11}^{k_d} \\
&=\trace B_{11}^{k_0} B_{12} B_{22}^{m_1} B_{21} B_{11}^{k_1}  \cdots B_{11}^{k_{d-1} } B_{12} B_{22}^{m_{d}} B_{21} B_{11}^{k_d}.
\end{align*}
This trace is non-negative since all matrices in the product only contain non-negative entries.


(ii) $\Rightarrow$ (iii). By the spectral theorem, we may write $W_1^t A_{12}A_{21} W_1= V \Lambda V^t$ where $V$ is a $2 \times 2$ orthogonal matrix and $\Lambda = \diag(\lambda_1,\lambda_2)$ with $\lambda_1 \geq \lambda_2 \geq 0$. Note that $v_1$, the eigenvector associated with largest eigenvalue of $W_1^tA_{12}A_{21}W_1$, is the first column of $V$. If $\lambda_1 = \lambda_2$, $v_1 = (1,0)$ and the inequality holds. If $a_{11}= a_{22}$ or $a_{33}=a_{44}$, $W_1^t A_{11} W_1 = A_{11}$ or $W_2^t A_{22}W_2 = A_{22}$, and choosing $W_1$ or $W_2$ such that $a_{23} = 0$ then ensures the inequality in (iii) holds. 

Assume now that $\lambda_1 > \lambda_2$, $a_{11} > a_{22}$, and $a_{33} > a_{44}$. It follows by assumption that
\begin{align*}
0 &\leq \frac{1}{a_{11}^k} \frac{1}{a_{33}^k} \frac{1}{\lambda_1^k} \trace A_{11}^k A_{12} A_{22}^k A_{21} (A_{12}  A_{21})^k \\
&= \trace \begin{pmatrix}
1 & 0 \\ 0 & (\frac{a_{22}}{a_{11}})^k 
\end{pmatrix} 
W_1^t A_{12} W_2
 \begin{pmatrix}
1 & 0 \\ 0 & (\frac{a_{44}}{a_{33}})^k 
\end{pmatrix}
W_2^t A_{21} W_1
 V \begin{pmatrix}
1 & 0 \\ 0 & (\frac{\lambda_1}{\lambda_2})^k
\end{pmatrix} V^t \\
&  \to \trace \begin{pmatrix}
1 & 0 \\ 0 & 0
\end{pmatrix} W_1^t A_{12} W_2 \begin{pmatrix}
1 & 0 \\ 0 & 0 
\end{pmatrix}W_2^t A_{21}W_1 V \begin{pmatrix}
1 & 0 \\ 0 &0
\end{pmatrix} V^t  
\end{align*} 
as $k \to \infty$. This gives the inequality in (iii) since
\begin{align*}
\MoveEqLeft \trace \begin{pmatrix}
1 & 0 \\ 0 & 0
\end{pmatrix} W_1^t A_{12} W_2 \begin{pmatrix}
1 & 0 \\ 0 & 0 
\end{pmatrix}W_2^tA_{21}W_1 V \begin{pmatrix}
1 & 0 \\ 0 &0
\end{pmatrix} V^t   \\
 & = v_{11} a_{13} (v_{11}  a_{13}  + v_{21} a_{23} ).
\end{align*}

(iii) $\Rightarrow$ (i). To ease the notation and without loss of generality assume that $W = I$. We are then pursuing two $2 \times 2$ orthogonal matrices $U_1$ and $U_2$ such that $U_1^t A_{11} U_1, U_1^t A_{12} U_2$, and $U_2^t A_{22} U_2$ all have non-negative entrances. Initially consider $D_1$ and $D_2$ on the form $\diag(\pm 1, \pm 1)$. Then clearly,  $D_1 A_{11} D_1 = A_{11}$ and $D_2 A_{22} D_2=A_{22}$ since $A_{11}$ and $A_{22}$ are diagonal matrices. Next, note that either it is possible to find $D_1$ and $D_2$  such that $D_1 A_{12} D_2$ has all entrances non-negative or such that 
\begin{align}\label{matform}
D_1 A_{12} D_2 = \begin{pmatrix*}[r]
a_{13} & a_{14} \\
a_{23} & -a_{24}
\end{pmatrix*}
\end{align}
where $a_{13},a_{23},a_{14},a_{24} > 0$. Consequently, we will assume $A_{12}$ is on the form in (\ref{matform}) since otherwise choosing $U_1 = D_1$ and $U_2 = D_2$ would be sufficient. 

As one of two cases, assume $ a_{13}a_{23} - a_{14}a_{24} \geq 0$, and define 
\begin{align*}
U_2 = \begin{pmatrix}
\alpha   \frac{a_{14} a_{24}}{ a_{23}} & \beta a_{23} \\
\alpha a_{14} & -\beta a_{24}
\end{pmatrix}
\end{align*}
where $\alpha, \beta > 0$ are chosen such that each column in $U_2$ has norm one. Then $U_2$ is orthogonal, 
\begin{align*}
A_{12} U_2 = \begin{pmatrix}
\alpha (a_{14}^2 + \frac{a_{13}a_{14}a_{24}}{a_{23}}) & \beta (a_{13}a_{23}-a_{14}a_{24}) \\
0 & \beta (a_{23}^2+a_{24}^2)
\end{pmatrix}, 
\end{align*}
and 
\begin{align*}
U_2^t A_{22} U_2 = \begin{pmatrix}
\alpha^2 \left( a_{33} \left( \frac{a_{14}a_{24}}{a_{23}} \right)^2 + a_{44} a_{14}^2 \right) & \alpha \beta a_{14}a_{24} (a_{33} - a_{44}) \\
\alpha \beta a_{14} a_{24} (a_{33} - a_{44}) & \beta^2 a_{23}^2 + \beta^2 a_{24}^2 
\end{pmatrix}.
\end{align*} 
Since $a_{33} \geq a_{44}$, all entries in $A_{12} U_2 $ and $U_2^t A_{22} U_2 $ are non-negative. Choosing $U_1= I$ then gives a pair of orthogonal matrices with the desired property. 

Now assume $a_{13}a_{23} - a_{14}a_{24} < 0$. Note that $A_{12}$ on the form (\ref{matform}) can not be singular and consequently, there exists $\lambda_1 \geq \lambda_2 > 0$ and an orthogonal matrix $V$ such that $A_{12}A_{21} = V \Lambda V^t$, where $\Lambda = \diag(\lambda_1,\lambda_2)$. Furthermore, since $V$ contains the eigenvectors of $A_{12}A_{21}$ we may assume $v_{11}$ and $v_{12}$ have the same sign where $v_{ij}$ is the $(i,j)$-th component of $V$. Define 
\begin{align}\label{defW}
W = A_{21} V ( \Lambda^{1/2} )^{-1},
\end{align}
and note that this is an orthogonal matrix which, together with $V$, decomposes $A_{12}$ into its singular value decomposition, that is, $V^t A_{12} W = \Lambda^{1/2}$. Then
\begin{align*}
V^t A_{11} V = \begin{pmatrix}
a_{11} v_{11}^2 + a_{22} v_{21}^2 & v_{11} v_{12} (a_{11} - a_{22}) \\
v_{11} v_{12} (a_{11} - a_{22}) & a_{11} v_{12}^2 + a_{22} v_{22}^2 
\end{pmatrix}.
\end{align*}
All entries in $V^t A_{11} V$ are non-negative since we chose $v_{11}$ and $v_{12}$ to have the same sign, and since $a_{11} \geq a_{22}>0$. 

To see that $W^t A_{22} W$ also have all entries non-negative, consider the first line in the eigenequations for $A_{12}A_{21}$ associated with the eigenvector $(v_{12},v_{22})$, the eigenvector associated with the smallest eigenvalue $\lambda_2$,
\begin{align}\label{eigeneqn}
(a_{13}^2+a_{14}^2 - \lambda_2) v_{12} + (a_{13}a_{23} -a_{14}a_{24})v_{22} = 0. 
\end{align}
Since $\lambda_2$ is the smallest eigenvalue of $A_{12}A_{21}$, 
\begin{align*}
 \lambda_2 = \inf_{\vert v \vert =1} v^tA_{12}A_{21} v,
 \end{align*}
 and since the off-diagonal elements in $A_{12}A_{21}$ are non-zero, $(1,0)$ and $(0,1)$ cannot be eigenvectors. Consequently, $\lambda_2$ is strictly smaller than any diagonal element of $A_{12}A_{21}$, and in particular $a_{13}^2+a_{14}^2 - \lambda_2 > 0$. Since we also have $a_{13}a_{23} -a_{14}a_{24} < 0$, (\ref{eigeneqn}) gives that $v_{12}$ and $v_{22}$ need to have the same sign for the sum to equal zero. Let $w_{ij}$ be the $(i,j)$-th component of $W$ and note that by (\ref{defW}), 
\begin{align*}
w_{11}w_{12} = \frac{v_{11} a_{13} + v_{21}a_{23}}{\lambda_1^{1/2}} \frac{v_{12} a_{13} + v_{22}a_{23}}{\lambda_2^{1/2}}.
\end{align*} 
The assumption $v_{11} a_{13} (v_{11} a_{13} + v_{21}a_{23}) \geq 0$ implies that $v_{11} a_{13} + v_{21}a_{23}$ and $v_{11}$ have the same sign. Since $v_{11}$ and $v_{12}$ were chosen to have the same sign, and $v_{12}$ and $v_{22}$ have the same sign, we conclude that $(v_{11} a_{13} + v_{21}a_{23})(v_{12} a_{13} + v_{22}a_{23})$ is non-negative and therefore, $w_{11}w_{12}$ is non-negative too. Then writing 
\begin{align*}
W^t A_{22} W =  \begin{pmatrix}
a_{33} w_{11}^2 + a_{44} w_{21}^2 & w_{11} w_{12} (a_{33} - a_{44}) \\
w_{11} w_{12} (a_{33} - a_{44}) & a_{33} w_{12}^2 + a_{44} w_{22}^2 
\end{pmatrix}
\end{align*}
makes it clear that $W^t A_{22} W$ has non-negative elements. Thus, letting $U_1= V$ and $U_2= W$ completes the proof. 
\end{proof}


\begin{corollary}\label{Rotpropcor}
Let $A$ and $v_1$ be given as in Lemma \ref{Rotprop}. Then there exists a $(2,2)$-signature matrix $U$ such that $U^t A U$ has non-positive off-diagonal elements if and only if 
\begin{align}\label{ineqsect4}
v_{21} a_{24} ( v_{21} a_{24} + v_{11} a_{14}) \geq 0. 
\end{align}
\end{corollary}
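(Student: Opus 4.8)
The plan is to deduce the corollary from Lemma \ref{Rotprop} by a single sign-reversing trick, rather than rerunning the whole rotation argument. Fix a constant $c > \lambda_{\max}(A)$ and set $B = cI - A$, which is again positive definite. Since any $(2,2)$-signature matrix $U$ is orthogonal, $U^t B U = cI - U^t A U$, so the off-diagonal entries of $U^t B U$ are exactly the negatives of those of $U^t A U$ while the diagonal stays positive. Hence there exists a $(2,2)$-signature $U$ making the off-diagonal entries of $U^t A U$ non-positive if and only if there exists one making the off-diagonal entries of $U^t B U$ non-negative. By Lemma \ref{Rotprop}, (i)$\Leftrightarrow$(iii) applied to $B$, the latter is equivalent to inequality (iii) computed for $B$. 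Everything then reduces to identifying that inequality with (\ref{ineqsect4}).

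Next I would put $B$ into the ordered nice form the lemma requires and track how the data transform. With $A$ in the given nice form, the blocks of $B$ are $B_{11} = \diag(c-a_{11}, c-a_{22})$ and $B_{22} = \diag(c-a_{33}, c-a_{44})$, while $B_{12} = -A_{12}$. Because $a_{11}\geq a_{22}$ and $a_{33}\geq a_{44}$, these diagonal entries now appear in \emph{increasing} order, so reaching the descending nice form forces a conjugation by the block swap $P = \diag(P_1,P_2)$, where $P_1 = P_2$ is the $2\times 2$ permutation matrix interchanging the two coordinates. Setting $\tilde B = P^t B P$ (still a nice form of $B$, as $P$ is a $(2,2)$-signature) one reads off $\tilde a_{13} = -a_{24}$ and $\tilde a_{23} = -a_{14}$.

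Finally I would follow the top eigenvector through this swap. Since $\tilde B_{12}\tilde B_{21} = P_1 (A_{12}A_{21}) P_1$, its eigenvector for the largest eigenvalue is $P_1 v_1 = (v_{21}, v_{11})$, so $\tilde v_{11} = v_{21}$ and $\tilde v_{21} = v_{11}$. Substituting into the inequality (iii) of Lemma \ref{Rotprop} written for $\tilde B$ gives
\[
\tilde v_{11}\tilde a_{13}\bigl(\tilde v_{11}\tilde a_{13} + \tilde v_{21}\tilde a_{23}\bigr) = v_{21}(-a_{24})\bigl(v_{21}(-a_{24}) + v_{11}(-a_{14})\bigr) = v_{21}a_{24}(v_{21}a_{24} + v_{11}a_{14}),
\]
which is exactly the left-hand side of (\ref{ineqsect4}). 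Chaining the three equivalences then proves the corollary.

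The conceptual crux is the observation $B = cI - A$, which reverses off-diagonal signs while preserving positive-definiteness and thereby converts the non-positive problem into the non-negative one already settled; the rest is bookkeeping about how the induced coordinate swap acts on $v_1$ and on the $a_{ij}$. The one genuinely delicate point, which I would treat last, is the degenerate case $a_{11}=a_{22}$ or $a_{33}=a_{44}$: there the corresponding block of $B$ is scalar, the signature $W$ is not unique, and $v_1$ may be ambiguous, so one must verify that the convention fixed for $A$ in the setup of Lemma \ref{Rotprop} transports to the analogous convention for $\tilde B$. This is handled just as in the lemma, using that a scalar block contributes a zero intra-block off-diagonal under any rotation, so I expect it to be routine rather than a real obstacle.
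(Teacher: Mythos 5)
Your proof is correct, and the key reduction is genuinely different from the paper's. The paper converts the non-positive-off-diagonal problem into the non-negative-entries problem of Lemma \ref{Rotprop} by explicitly building $U_1=-W_1P_1\tilde U_1$, $U_2=W_2P_1\tilde U_2$ from the signature matrix $\tilde U$ that the lemma supplies for an auxiliary matrix $\tilde A$, and then checking entry by entry that the minus sign negates the off-diagonal block while the conjugation by the swap $P_1$ negates the intra-block off-diagonal entries $\tilde u_{11}\tilde u_{12}(a_{11}-a_{22})$. Your substitution $B=cI-A$ achieves both sign reversals at once and uniformly over all orthogonal $U$, since $U^tBU=cI-U^tAU$ and the diagonal of $U^tBU$ is automatically positive for the positive definite $B$; this makes the equivalence of the two existence problems immediate and spares you from constructing or verifying any signature matrix by hand, while keeping $B$ honestly positive definite so that Lemma \ref{Rotprop} applies verbatim. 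The remaining bookkeeping (conjugation by $\diag(P_1,P_1)$ to restore the descending order, $\tilde a_{13}=-a_{24}$, $\tilde a_{23}=-a_{14}$, $\tilde v_1=P_1v_1$, and the quadratic invariance under $v_1\mapsto -v_1$) coincides with what the paper does. Two small caveats, both present at the same level of informality in the paper's own proof: when $A_{12}A_{21}$ is a scalar matrix the fixed convention gives $v_1=\tilde v_1=(1,0)$ rather than $\tilde v_1=P_1v_1$, so your displayed identity fails as an identity there, but both inequalities are then trivially satisfied and the equivalence survives; and in the degenerate case $a_{11}=a_{22}$ or $a_{33}=a_{44}$ the convention fixed before Lemma \ref{Rotprop} is tuned to the lemma's inequality rather than to (\ref{ineqsect4}), so, as you note, the convention must be re-fixed for $B$ --- the paper glosses over the same point.
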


\begin{proof}
Let $W$ be defined as in Lemma \ref{Rotprop}. Define 
\begin{align*}
P_1 = \begin{pmatrix}
0 & 1 \\
1 & 0
\end{pmatrix} \quad \text{and } \quad P = \begin{pmatrix}
P_1 & 0 \\
 0 & P_1
\end{pmatrix}.
\end{align*}
Then $P_1 v_1 = (v_{21},v_{11})$ is the eigenvector of $P_1 W_1^t A_{12} A_{21} W_1 P_1$ associated with the largest eigenvalue. Let  
\begin{align*}
\tilde{A} = \begin{pmatrix}
W_1^t A_{11} W_1 & P_1 W_1^t A_{12}W_2 P_1\\
P_1 W_2^t A_{21} W_1 P_1 & W_2^t A_{22}W_2
\end{pmatrix} =  \begin{pmatrix}
 a_{11} & 0 & a_{24} & a_{23} \\
 0 & a_{22} & a_{14} & a_{13} \\
a_{24} & a_{14} & a_{33} & 0 \\
a_{23} & a_{13} & 0 & a_{44} 
\end{pmatrix}.
\end{align*}
By Lemma \ref{Rotprop}, there exists a $(2,2)$-signature matrix 
\begin{align*}
\tilde{U} = \begin{pmatrix}
\tilde{U}_1 & 0 \\
0 & \tilde{U}_2
\end{pmatrix}
\end{align*} 
such that $\tilde{U}^t \tilde{A} \tilde{U}$ has non-negative entries if and only if $v_{21} a_{24} ( v_{21} a_{24} + v_{11} a_{14}) \geq~0$. Define now the $(2,2)$-signature matrix $U$ as 
\begin{align*}
U = \begin{pmatrix}
U_1 & 0 \\
0 & U_2 
\end{pmatrix} = 
\begin{pmatrix}
- W_1 P_1 \tilde{U}_1 & 0 \\
0 & W_2 P_1 \tilde{U}_2
\end{pmatrix}.
\end{align*}
Let $\tilde{u}_{ij}$ be the $(i,j)$-th component of $\tilde{U}_1$. Since $\tilde{U}_1$ is orthogonal, $\tilde{u}_{12}\tilde{u}_{22} = -\tilde{u}_{11} \tilde{u}_{21}$ implying that 
\begin{align*}
U_1^t A_{11} U_1 = \begin{pmatrix}
\tilde{u}_{11}^2 a_{22} + \tilde{u}_{21}^2a_{11} & \tilde{u}_{11} \tilde{u}_{12}(a_{22}-a_{11}) \\
 \tilde{u}_{11}\tilde{u}_{12}(a_{22}-a_{11}) & \tilde{u}_{12}^2 a_{22} + \tilde{u}_{22}^2a_{11}
\end{pmatrix}
\end{align*}
and 
\begin{align*}
\tilde{U}^t_1 W_1^t  A_{11}  W_1 \tilde{U}_1=  \begin{pmatrix}
\tilde{u}_{11}^2 a_{11} + \tilde{u}_{21}^2a_{22} & \tilde{u}_{11} \tilde{u}_{12}(a_{11}-a_{22}) \\
 \tilde{u}_{11}\tilde{u}_{12}(a_{11}-a_{22}) & \tilde{u}_{12}^2 a_{11} + \tilde{u}_{22}^2a_{22}
\end{pmatrix}.
\end{align*}
Consequently $\tilde{U}^t_1 W_1^t  A_{11}  W_1 \tilde{U}_1$ has non-negative elements if and only if $U_1^t A_{11} U_1$ has non-positive off-diagonal elements. Similarly, $\tilde{U}^t_2 W_2^t  A_{22}  W_2 \tilde{U}_2$ has non-negative elements if and only if $U_2^t A_{22} U_2$ has non-positive off-diagonal elements by a similar argument. Finally we note that 
\begin{align*}
U_1^t A_{12} U_2 = - \tilde{U}^t_1 P_1 W_1^t A_{12} W_2 P_1 \tilde{U}_2,
\end{align*}
and it follows that $U^t A U$ has non-positive off-diagonal elements if and only if 
\begin{align*}
\tilde{U}_1^t P_1 W_1^t A_{12} W_2 P_1 \tilde{U}_2, \quad \tilde{U}_1^t W_1^t A_{11} W_1 \tilde{U}_1 \quad \text{and,} \quad \tilde{U}_2^t W_2 A_{22} W_2 \tilde{U}_2
\end{align*}
have all entries non-negative. We conclude that we can find a $(2,2)$-signature matrix $U$ such that $U^t A U$ has non-positive off-diagonal element if and only if (\ref{ineqsect4}) holds. 
\end{proof}

The following lemma will be useful in the proof of Theorem \ref{Tracethm}. A proof can be found in \cite[Lemma 13.2.2]{MARCUS}.

\begin{lemma}\label{inflemma}
Let $\psi : \RR_+^n \rightarrow (0,\infty)$ be a continuous function. Suppose that, for all $a>0$ sufficiently large, $\log \psi (a(1-s_1),\dots,a(1-s_n))$ has a power series expansion for $s=(s_1,\dots,s_n) \in [0,1]^n$ around $s=0$ with all its coefficients non-negative, except for the constant term. Then $\psi$ is the Laplace transform of an infinitely divisible random variable in $\RR_+^n$.  
\end{lemma}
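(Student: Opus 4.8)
The plan is to realise $\psi$, up to the constant $\psi(0)$, as a limit of Laplace transforms of \emph{discrete} infinitely divisible random vectors read off from the hypothesised power series, and then to invoke the closedness of the class of infinitely divisible laws under weak convergence. Throughout write $\mathbf 1 = (1,\dots,1)$, and for a multi-index $k=(k_1,\dots,k_n)\in\NN_0^n$ write $s^k = s_1^{k_1}\cdots s_n^{k_n}$.

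First I would fix an $a>0$ large enough for the hypothesis to apply and record the algebra. Writing the assumed expansion as
\begin{align*}
\log\psi(a(1-s_1),\dots,a(1-s_n)) = \log\psi(a\mathbf 1) + \sum_{k\neq 0} c^{(a)}_k\, s^k, \qquad c^{(a)}_k\geq 0,
\end{align*}
and evaluating at $s=\mathbf 1$ shows that $\lambda_a := \sum_{k\neq 0} c^{(a)}_k = \log\psi(0)-\log\psi(a\mathbf 1)$ is finite and non-negative, using that $\psi$ takes values in $(0,\infty)$. Subtracting the value at $s=\mathbf 1$ yields
\begin{align*}
\frac{\psi(a(1-s))}{\psi(0)} = \exp\Big(\sum_{k\neq 0} c^{(a)}_k\,(s^k-1)\Big),
\end{align*}
which is exactly the probability generating function of a compound Poisson $\ZZ_+^n$-valued random vector $M^{(a)}$ with intensity measure $\sum_{k\neq 0} c^{(a)}_k\,\delta_k$; it is a genuine generating function because the coefficients are non-negative and it equals $1$ at $s=\mathbf 1$, and being compound Poisson, $M^{(a)}$ is infinitely divisible.

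Next I would pass from the lattice to $\RR_+^n$. Setting $s_i = e^{-\theta_i/a}$ for $\theta\in\RR_+^n$ turns the generating function into a Laplace transform: the rescaled, still infinitely divisible vector $Z^{(a)} := M^{(a)}/a$ satisfies
\begin{align*}
\EE\big[e^{-\langle\theta, Z^{(a)}\rangle}\big] = \frac{\psi\big(a(1-e^{-\theta_1/a}),\dots,a(1-e^{-\theta_n/a})\big)}{\psi(0)}.
\end{align*}
Since $a(1-e^{-\theta_i/a})\to\theta_i$ as $a\to\infty$ and $\psi$ is continuous, the right-hand side converges pointwise on $\RR_+^n$ to $\psi(\theta)/\psi(0)$, which is continuous and equals $1$ at $\theta=0$. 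By the continuity theorem for Laplace transforms of $\RR_+^n$-valued vectors, $Z^{(a)}$ converges in distribution along any sequence $a\to\infty$ to a vector $Z$ with $\EE[e^{-\langle\theta,Z\rangle}] = \psi(\theta)/\psi(0)$, and since infinite divisibility is preserved under weak limits, $Z$ is infinitely divisible. Under the normalisation $\psi(0)=1$ forced on any genuine Laplace transform, this exhibits $\psi$ as the Laplace transform of an infinitely divisible random variable in $\RR_+^n$.

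I expect the limiting step, rather than the algebra, to be the main obstacle: one must ensure the family $\{Z^{(a)}\}$ does not lose mass at infinity, which is precisely where continuity of the limit $\psi/\psi(0)$ at the origin supplies tightness in the continuity theorem, and one must invoke the (standard but essential) preservation of infinite divisibility under weak convergence. A secondary point is the bookkeeping of the unconstrained constant term, which enters only through the factor $\psi(0)$, so the argument is naturally about $\psi/\psi(0)$ and yields the stated conclusion once $\psi(0)=1$.
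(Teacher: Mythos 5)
Your proof is correct. Note that the paper does not prove this lemma itself but only cites \cite[Lemma 13.2.2]{MARCUS}, and your argument — reading off a compound Poisson law on $\ZZ_+^n$ from the non-negative coefficients via $\psi(a(1-s))/\psi(0)=\exp\bigl(\sum_{k\neq 0}c^{(a)}_k(s^k-1)\bigr)$, rescaling by $1/a$, and passing to the limit with the continuity theorem and closedness of the infinitely divisible laws under weak convergence — is essentially the standard proof given in that reference; your remark that the statement implicitly requires the normalisation $\psi(0)=1$ (automatic in the paper's application, where $\psi$ is the Laplace transform $P$) is also apt.
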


We now give the proof of Theorem \ref{Tracethm}, where all the main steps follow similar as in \cite[Proof of Theorem 13.2.1]{MARCUS}, but with several modifications to adjust to a different setting. E.g. there is a difference in the $S$ matrix appearing in the proof.

\begin{proof}[Proof of Theorem \ref{Tracethm}]
By \cite[Lemma 5.2.1]{MARCUS},
\begin{align*}
P(s_1,s_2) &= \EE \exp \{ - \tfrac{1}{2} a( (1-s_1) (X_1^2+\dots + X_{n_1}^2) + (1-s_2) (X_{n_1+1}^2+\cdots + X_{n_2}^2))\}  \\
& = \frac{1}{\vert I + \Sigma a (I - S) \vert^{1/2} },
\end{align*}
where $S$ is the $(n_1+n_2)\times (n_1 +n_2)$ diagonal matrix with $s_1$ on the first $n_1$ diagonal entries and $s_2$ on the remaining $n_2$ diagonal entries. Recall that $Q= I -(I+ a\Sigma)^{-1}$. Then 
\begin{align*}
 P(s_1,s_2)^2 &= \vert I + a \Sigma -a \Sigma S \vert ^{-1}  \\
&= \vert (I-Q)^{-1} - ((I-Q)^{-1} - I)S \vert ^{-1} \\
&= \vert I-Q \vert \vert I- QS \vert ^{-1},
\end{align*}
from which it follows that
\begin{align}\label{Lap}\begin{aligned}
2 \log P(s_1,s_2) &= \log \vert I -Q \vert - \log \vert I - QS \vert \\
&= \log \vert I - Q \vert + \sum_{n=1}^\infty \frac{\trace \{ (QS)^n \} }{n},
\end{aligned}
\end{align}
where the last equality follows from \cite[p. 562]{MARCUS}.
Now assume that the vector $(X_1^2+\cdots + X_{n_1}^2,X_{n_1+1}^2+\cdots + X_{n_1+n_2}^2)$ is infinitely divisible, and write 
\begin{align*}
(X_1^2+\cdots + X_{n_1}^2,X_{n_1+1}^2+\cdots + X_{n_1+n_2}^2) \stackrel{d}{=} Y_{1}^n + \dots + Y_n^n
\end{align*}
where $Y_1^n,\dots Y_n^n$ are $2$-dimensional independent identically distributed stochastic vectors. Let $Y_{ij}^n$ be the $j$-th component of $Y_{i}^n$ and note that $Y_{ij}^n \geq 0$ a.s. for all $i,j,n$. Then
\begin{align*}
P(s_1,s_2)^{1/n} = \EE \exp \{ - \tfrac{1}{2} a( (1-s_1) Y_{11}^n + (1-s_2) Y_{12}^n )\}. 
\end{align*}
That $P^{1/n}(s_1,s_2)$ has a power series expansion with all coefficient non-negative follows from writing 
\begin{align*}
 \exp \{ - \tfrac{1}{2}a ( (1-s_j) Y_{1j}^n)\} =   \exp \{ - \tfrac{1}{2} a  Y_{1j}^n)\} ) \sum_{k=0}^\infty \frac{(s_j a Y_{1j}^n)^k	}{2^k k!} .
\end{align*}
We have that
\begin{align}\label{loglim}
\log P(s_1,s_2) = \lim_{n\rightarrow \infty } (n (P^{1/n}(s_1,s_2) -1)).
\end{align}
Note that $(s_1,s_2) \mapsto n (P^{1/n}(s_1,s_2) -1)$ and all its derivatives converge uniformly on $[0,1)\times [0,1)$ by a Weierstrass M-test (see for example \cite[Theorem 7.10]{RUDIN}). Consequently, we may use \cite[Theorem 7.17]{RUDIN} to conclude that 
\begin{align*}
\frac{\partial^{\alpha + \beta}}{\partial s_1^\alpha \partial s_2^\beta} \lim_{n \rightarrow \infty } (n (P^{1/n}(s_1,s_2) -1))= \lim_{n \rightarrow \infty } \frac{\partial^{\alpha + \beta}}{\partial s_1^\alpha \partial s_2^\beta} (n (P^{1/n}(s_1,s_2) -1))
\end{align*}
for any $\alpha,\beta \in \NN_0$. Thus, that all the terms in the power series expansion of $P^{1/n}(s_1,s_2) $ are non-negative implies that all the terms in the power series representation of $\log P(s_1,s_2)$ except the constant term are non-negative by (\ref{loglim}). By (\ref{Lap}) we conclude that any coefficient in front of $s_1^k s_2^m$ in $\trace \{ (QS)^{k+m} \}$ has to be non-negative for all $k,m \in \NN$ and $a>0$. Expanding out the trace then gives that this is equivalent to non-negativity of the sum in (\ref{sumoftrace}) for all $k,m \in \NN_0$.

On the other hand, if the sum in (\ref{sumoftrace}) is non-negative for all $k,m \in \NN_0$ and $a>0$ sufficiently large, (\ref{Lap}) and Lemma \ref{inflemma} imply that 
\begin{align*}
(X_1^2+\cdots + X_{n_1}^2,X_{n_1+1}^2+\cdots + X_{n_1+n_2}^2)
\end{align*}
is infinitely divisible. 
\end{proof}

\begin{proof}[Proof of Theorem \ref{Suffcond}]

Lemma \ref{Rotprop} implies the equivalence in (i). Now we set out to show (ii), i.e., to show that the sum in Theorem \ref{Tracethm} is non-negative for $k,m \in \NN_0$ such that $k\leq 2$, $m\leq 2$, or $k+m\leq 7$ in the special case $n_1 = n_2 = 2$. To this end, consider a $4 \times 4$ positive definite matrix $Q$ and write 
\begin{align*}
Q = \begin{pmatrix}
Q_{11} & Q_{12} \\
Q_{21} & Q_{22}
\end{pmatrix}
\end{align*}
where $Q_{ij}$ is a $2 \times 2$ matrix for $i,j =1,2$. Let $W_1$ and $W_2$ be two $2 \times 2$ orthogonal matrices and define $P_{ij}= W_i Q_{ij} W_j$. Then  
\begin{align}\label{trick}
\begin{aligned}
\MoveEqLeft \trace Q_{11}^{k_1} Q_{12}Q_{22}^{m_1} Q_{21}  \cdots Q_{12}Q_{22}^{m_{d}} Q_{21} Q_{11}^{k_{d+1}}  \\
 = & \trace P_{11}^{k_1} P_{12}P_{22}^{m_1} P_{21}  \cdots P_{12}P_{22}^{m_{d}} P_{21} P_{11}^{k_{d+1}}.
 \end{aligned}
\end{align}
Consequently (see Lemma \ref{existssignmat}), we may assume, without loss of generality, that $Q_{11}$ and $Q_{22}$ are diagonal with the first diagonal element greater than or equal the other and all entries non-negative. 

Either there exists $D_1$ and $D_2$ on the form $\diag (\pm 1, \pm 1)$ such that $D_1 Q_{12} D_2$ has all entries non-negative or such that 
\begin{align*}
D_1 Q_{12} D_2 = \begin{pmatrix}
q_{13} & q_{23} \\
q_{14} & -q_{24}
\end{pmatrix}
\end{align*}
where $q_{13},q_{23},q_{14},q_{24} >0$. If $D_1 Q_{12} D_2$ has all entries non-negative, writing as in (\ref{trick}) with $W_i$ replaced by $D_i$ implies non-negativity of each individual trace. We conclude that we may assume
\begin{align*}
Q = \begin{pmatrix}
\lambda_1 & 0 & q_{13} & q_{14} \\
0 & \lambda_2 & q_{23} & -q_{24} \\
q_{13} & q_{23} & \lambda_3 & 0 \\
q_{14} & -q_{24} & 0 & \lambda_4 
\end{pmatrix},
\end{align*}
where  $\lambda_1 \geq \lambda_2 \geq 0$ and $\lambda_3 \geq \lambda_4 \geq 0$ and  $q_{13},q_{23},q_{14},q_{24} >0$, without loss of generality.

We now write out the traces in (\ref{sumoftrace}) for specific values of $k$ and $m$ and show non-negativity in each case.

\subsubsection*{$k=0$ or $m=0$}

Assume $k=0$ and fix some $m \in \NN$. Then the terms in the sum in Theorem \ref{Tracethm} reduce to $\trace Q_{22}^m$. Since $Q_{22}$ is positive definite, $Q_{22}^m$ is positive definite. Consequently, $\trace Q_{22}^m >0$. Similarly, when $m=0$ and $k \in \NN$, the terms in the sum in Theorem \ref{Tracethm} reduce to $\trace Q_{11}^k$, which again is positive since $Q_{11}$ is positive definite. 

\subsubsection*{$k=1$ or $m=1$}

Assume $k=1$ and fix some $m\in \NN$. Then (\ref{sumoftrace}) reduces to 
\begin{align*}
\trace Q_{12} Q_{22}^m Q_{21} + \sum_{m_1 = 0}^{m-1} \trace Q_{22}^{m_1} Q_{21}Q_{12} Q_{22}^{m-1-m_1},
\end{align*}
which equals
\begin{align*}
(m+1) \trace Q_{12} Q_{22}^m Q_{21}. 
\end{align*}
Since $Q_{12} = Q_{21}^t$ and $Q_{22}$ is positive definite, $Q_{12}Q_{22}^m Q_{21}$ is positive semi-definite. We conclude that $\trace Q_{12} Q_{22}^m Q_{21} \geq 0$.

Assume $m=1$ and fix some $k \in \NN$. Similar to above, (\ref{sumoftrace}) reduces to 
\begin{align*}
\trace Q_{21} Q_{11}^k Q_{12} + \sum_{k_1 = 0}^{k-1} \trace Q_{11}^{k_1} Q_{12}Q_{21} Q_{11}^{k-1-k_1}.
\end{align*}
That this trace is non-negative follows by arguments similar to those above. 
 
\subsubsection*{$k=2$ or $m=2$}

Assume that $k=2$ and let $m \in \NN$. The case $m=1$ is discussed above. Assume $m \geq 2$. Then (\ref{sumoftrace}) reduces to 
\begin{align*}
\MoveEqLeft \trace Q_{11} Q_{12} Q_{22}^{m-1} Q_{21} \\
&+  \sum_{m_1 + m_2 +1 = m}  \trace Q_{22}^{m_1} Q_{21} Q_{11} Q_{12} Q_{22}^{m_2} \\
&+  \sum_{m_1 + m_2 +2 = m} \trace Q_{12} Q_{22}^{m_1} Q_{21} Q_{12} Q_{22}^{m_2} Q_{21}  \\
&+\sum_{m_1 + m_2 + m_3 +2 = m} \trace Q_{22}^{m_1} Q_{21} Q_{12} Q_{22}^{m_2} Q_{21} Q_{12} Q_{22}^{m_3}. 
\end{align*}
All the traces above are non-negative. To see this, consider for example 
\begin{align*}
\trace Q_{22}^{m_1} Q_{21} Q_{12} Q_{22}^{m_2} Q_{21} Q_{12} Q_{22}^{m_3}
\end{align*}
for some $m_1,m_2,m_3 \in \NN_0$. Since $Q_{22}$ is positive definite it has a unique positive definite square root $Q_{22}^{1/2}$. We conclude that 
\begin{align}\label{klig2}
\MoveEqLeft \trace Q_{22}^{m_1} Q_{21} Q_{12} Q_{22}^{m_2} Q_{21} Q_{12} Q_{22}^{m_3}\notag  \\
& = \trace Q_{22}^{(m_1+m_3)/2} Q_{21} Q_{12} Q_{22}^{m_2} Q_{21} Q_{12} Q_{22}^{(m_1+m_3)/2}.
\end{align}
Note that 
\begin{align*}
Q_{22}^{(m_1+m_3)/2} Q_{21} Q_{12}  = (Q_{21} Q_{12} Q_{22}^{(m_1+m_3)/2})^t, 
\end{align*}
which implies that (\ref{klig2}) is the trace of positive semi-definite matrix and therefore non-negative.

Non-negativity of the traces when $m=2$ and $k\in \NN$ follows by symmetry. 

\subsubsection*{$k=3$ and $m=3$}

In the following we will need to expand traces, and we therefore note that
\begin{align}\label{Verysimpletraceexpansion}
\trace \ Q_{11}^k Q_{12} Q_{22}^m Q_{21} = \lambda_1^k \lambda_3^m q_{13}^2 + \lambda_1^k \lambda_4^m q_{14}^2 + \lambda_2^k \lambda_3^m q_{23}^2 + \lambda_2^k \lambda_4^m q_{24}^2
\end{align}
for any $k,m \in \NN$, and
\begin{align}\label{Simpletraceexpansion}
\begin{aligned}
\MoveEqLeft \trace \ Q_{11}^{k_1} Q_{12} Q_{22}^{m_1} Q_{21} Q_{11}^{k_2} Q_{12} Q_{22}^{m_2} Q_{21}\\
&=  \lambda_1^{k_1+k_2}\lambda_3^{m_1+m_2} q_{13}^4+ \lambda_1^{k_1+k_2}\lambda_4^{m_1+m_2} q_{14}^4 \\
&+ \lambda_2^{k_1+k_2}\lambda_3^{m_1+m_2} q_{23}^4+ \lambda_2^{k_1+k_2}\lambda_4^{m_1+m_2} q_{24}^4 \\
&+\lambda_1^{k_1+ k_2}( \lambda_3^{m_1} \lambda_4^{m_2}+ \lambda_4^{m_1} \lambda_3^{m_2}) q_{13}^2q_{14}^2\\
&+\lambda_2^{k_1+ k_2}( \lambda_3^{m_1} \lambda_4^{m_2} + \lambda_3^{m_2} \lambda_4^{m_1}) q_{23}^2q_{24}^2\\
&+\lambda_3^{m_1 + m_2}(\lambda_1^{ k_1} \lambda_2^{k_2} +\lambda_1^{ k_2} \lambda_2^{k_1} ) q_{13}^2q_{23}^2\\
&+ \lambda_4^{m_1+m_2}  (\lambda_1^{k_1} \lambda_2^{k_2}+\lambda_1^{k_2} \lambda_2^{k_1}) q_{14}^2q_{24}^2\\ 
&- (\lambda_1^{k_1} \lambda_2^{k_2} + \lambda_1^{k_2} \lambda_2^{k_1})(\lambda_3^{m_1} \lambda_4^{m_2} + \lambda_3^{m_2} \lambda_4^{m_1}) q_{13}q_{23}q_{14}q_{24}
\end{aligned}
\end{align}
for any $k_1,k_2,m_1,m_2 \in \NN$.

Assume now $k=3$ and $m=3$ and consider the sum in Theorem \ref{Tracethm}. The sum contains all terms on the form 
\begin{align*}
\trace Q_{11}^{k_1} Q_{12} Q_{22}^2 Q_{21} Q_{11}^{k_2} 
\end{align*}
where $k_1+k_2 =2$ and 
\begin{align*}
\trace Q_{22}^{m_1} Q_{21} Q_{11}^2 Q_{12} Q_{22}^{m_2} 
\end{align*}
where $m_1 + m_2 =2$. All these traces equal
\begin{align*}
\trace Q_{11}^2 Q_{12} Q_{22}^2Q_{21}, 
\end{align*}
and there are all together 6 of these terms. Next, the sum in Theorem \ref{Tracethm} also contains all terms on the form
\begin{align*}
\trace Q_{11}^{k_1} Q_{12} Q_{22}^{m_1} Q_{21} Q_{11}^{k_2} Q_{12} Q_{22}^{m_2} Q_{21} Q_{11}^{k_3}
\end{align*}
where $k_1 + k_2 + k_3 =1$ and $m_1 + m_2 = 1$, and 
\begin{align*}
\trace Q_{22}^{m_1} Q_{21} Q_{11}^{k_1} Q_{12} Q_{22}^{m_2} Q_{21} Q_{11}^{k_2} Q_{12} Q_{22}^{m_3}
\end{align*}
where $m_1 + m_2 + m_3 =1$ and $k_1 + k_2 = 1$. Using both that $\trace AB = \trace BA$ and $\trace A^t = \trace A$ for any two square matrices $A$ and $B$ of the same dimensions we get that all these traces share the common trace
\begin{align*}
\trace Q_{11} Q_{12}Q_{21} Q_{12}Q_{22}Q_{21}.
\end{align*}
All together there are 12 of these terms. Finally, the sum in Theorem \ref{Tracethm} contains the two terms 
\begin{align*}
\trace (Q_{12} Q_{21})^3 \quad \text{and} \quad \trace (Q_{21} Q_{12})^3,
\end{align*} 
which share a common trace. We conclude that the sum in Theorem \ref{Tracethm} reads
\begin{align}\label{sumsect4}
\trace \ \{  6 Q_{11}^2 Q_{12}  Q_{22}^2 Q_{21} +12 Q_{11} Q_{12} Q_{21} Q_{12} Q_{22} Q_{21} + 2 (Q_{12} Q_{21})^3 \}. 
\end{align}
Since $Q_{12} = Q_{21}^t$, $Q_{12}Q_{21}$ is positive semi-definite and consequently, $\trace (Q_{12} Q_{21})^3\geq~0$. Furthermore, we have 
\begin{align*}
\trace \  Q_{11}^2 Q_{12}  Q_{22}^2 Q_{21} = \trace \  Q_{11} Q_{12}  Q_{22}^2 Q_{21} Q_{11} \geq 0. 
\end{align*}
Contrarily, there exists a positive definite matrix $Q$ such that 
\begin{align*}
\trace Q_{11} Q_{12} Q_{21} Q_{12} Q_{22} Q_{21} < 0.
\end{align*}
(To see this, consider $Q$ on the form in Example \ref{firstdeltaepsiloneksempel} with $\varepsilon$ small and $\delta$ large relative to $\varepsilon$.) We will now argue that despite this,(\ref{sumsect4}) remains non-negative. Initially we note that 
\begin{align*}
Q_{11}^{k_i} Q_{12} Q_{22}^{m_i} Q_{21} = \begin{pmatrix}
\lambda_1^{k_i} (\lambda_3^{m_i} q_{13}^2 + \lambda_4^{m_i} q_{14}^2)  & \lambda_1^{k_i} (\lambda_3^{m_i} q_{13}q_{23} -  \lambda_4^{m_i} q_{14}q_{24})  \\
\lambda_2^{k_i}( \lambda_3^{m_i} q_{13}q_{23} - \lambda_4^{m_i} q_{14}q_{24})  & \lambda_2^{k_i} (\lambda_3^{m_i} q_{23}^2 +\lambda_4^{m_i} q_{24}^2 )
\end{pmatrix}
\end{align*}
and 
\begin{align*}
Q_{22}^{m_i} Q_{21} Q_{11}^{k_i} Q_{12} = \begin{pmatrix}
\lambda_3^{m_i} (\lambda_1^{k_i} q_{13}^2 + \lambda_2^{k_i} q_{23}^2)  & \lambda_3^{m_i} (\lambda_1^{k_i} q_{13}q_{14} -  \lambda_2^{k_i} q_{23}q_{24})  \\
\lambda_4^{m_i}( \lambda_1^{k_i} q_{13}q_{14} - \lambda_2^{k_i} q_{23}q_{24})  & \lambda_4^{m_i} (\lambda_1^{k_i} q_{14}^2 +\lambda_2^{k_i} q_{24}^2 )
\end{pmatrix}.
\end{align*}
Since $\lambda_1 \geq \lambda_2$ and $\lambda_3 \geq \lambda_4$, we see that if $q_{13}q_{14} \geq q_{23}q_{24}$ or $q_{13} q_{23} \geq q_{14}q_{24}$, then one of two matrices above have only non-negative entrances for any $k_i, m_i \in \NN_0$. Consequently,
\begin{align*}
\trace Q_{11}^{k_1} Q_{12}Q_{22}^{m_1} Q_{21}Q_{11}^{k_2} Q_{12} Q_{22}^{m_2} Q_{21}  =\trace   Q_{22}^{m_1} Q_{21}Q_{11}^{k_1} Q_{12} Q_{22}^{m_2} Q_{21} Q_{11}^{k_2} Q_{12} 
\end{align*}
would be non-negative if this was the case. Especially, we would have
\begin{align*}
\trace Q_{11} Q_{12} Q_{21} Q_{12} Q_{22} Q_{21} \geq 0.
\end{align*}
Assume now that $q_{13} q_{14} \leq q_{23} q_{24}$ and $q_{13} q_{23} \leq q_{14}q_{24}$. By (\ref{Verysimpletraceexpansion}) and (\ref{Simpletraceexpansion}), 
\begin{align}\label{Ettrace}\begin{aligned}
\MoveEqLeft \trace \ \{ \tfrac{1}{2} Q_{11}^2Q_{12} Q_{22}^2 Q_{21} + Q_{11} Q_{12}Q_{22}Q_{21}Q_{12}Q_{21} \} \\
&=   \tfrac{1}{2} \lambda_1^2 \lambda_3^2 q_{13}^2 +  \tfrac{1}{2} \lambda_1^2 \lambda_4^2 q_{14}^2 +  \tfrac{1}{2} \lambda_2^2 \lambda_3^2 q_{23}^2 +  \tfrac{1}{2} \lambda_2^2 \lambda_4^2 q_{24}^2 \\
&+ \lambda_1\lambda_3 q_{13}^4+ \lambda_1\lambda_4 q_{14}^4 + \lambda_2\lambda_3 q_{23}^4+ \lambda_2\lambda_4 q_{24}^4 \\
& +\lambda_1( \lambda_3 + \lambda_4) q_{13}^2q_{14}^2 +\lambda_2( \lambda_3 +  \lambda_4) q_{23}^2q_{24}^2 \\
& +\lambda_3(\lambda_1  + \lambda_2 ) q_{13}^2q_{23}^2 + \lambda_4 (\lambda_1+ \lambda_2) q_{14}^2q_{24}^2 \\ 
& -  (\lambda_1 + \lambda_2)(\lambda_3 +  \lambda_4) q_{13}q_{23}q_{14}q_{24}.
\end{aligned}
\end{align}
We are going to bound the term $ (\lambda_1 + \lambda_2)(\lambda_3 +  \lambda_4) q_{13}q_{23}q_{14}q_{24}$ by the  positive terms to show non-negative of this trace. We recall that $\lambda_1 \geq \lambda_2>0$ and $\lambda_3 \geq \lambda_4>0$. Initially, note that 
\begin{align*}
\lambda_2 \lambda _3 q_{13}q_{23}q_{14}q_{24}& \leq \lambda_2 \lambda_3 q_{13}^2q_{23}^2 \\
\lambda_2 \lambda _4 q_{13}q_{23}q_{14}q_{24} &\leq \lambda_1 \lambda_4 q_{13}^2q_{14}^2 \\
\lambda_1 \lambda _4 q_{13}q_{23}q_{14}q_{24} &\leq \lambda_1 \lambda_3 q_{13}^2q_{14}^2. 
\end{align*}
This leaves only $\lambda_1 \lambda _3 q_{13}q_{23}q_{14}q_{24}$ to be bounded. If $\lambda_1 \lambda _3 q_{13}q_{23}q_{14}q_{24} \leq \frac{1}{2} \lambda_1^2\lambda_3^2 q_{13}^2$, we have a bounding term in (\ref{Ettrace}). Therefore, assume $2q_{23}q_{14}q_{24} \geq \lambda_1\lambda_3 q_{13}$. Since $Q$ was assumed positive definite, $\lambda_2 \lambda_4 \geq q_{24}^2$. Consequently,
\begin{align*}
\lambda_1 \lambda _3 q_{13}q_{23}q_{14}q_{24} &\leq 2 q_{23}^2q_{14}^2q_{24}^2 \\
&\leq 2 \lambda_2 \lambda_4 q_{23}^2 q_{13}^2 \\
& \leq \lambda_2 \lambda_4 ( q_{23}^4 + q_{13}^4) \\
& \leq \lambda_2 \lambda_3 q_{23}^4 + \lambda_1 \lambda_3 q_{13}^4.
\end{align*}
We conclude that (\ref{Ettrace}) and hence (\ref{sumsect4}) is non-negative.

\subsubsection*{$k+m=7$}

Now consider $k,m \in \NN$ such that $k+m=7$. Whenever $k,m=1,2$, we already know that the sum in Theorem \ref{Tracethm} is non-negative. Let $k=3$ and $m=4$. Then the sum in Theorem \ref{Tracethm} reads 
\begin{align}\label{kplusmequalseven}
\begin{aligned}
\MoveEqLeft \trace \ \{ 14Q_{11} Q_{12} Q_{21} Q_{12} Q_{22}^2 Q_{21} +  7 Q_{11}^2 Q_{12}  Q_{22}^3 Q_{21}\\
& 7 Q_{11} (Q_{12} Q_{22} Q_{21})^2 +7 Q_{12} Q_{22} Q_{21} (Q_{12} Q_{21})^2 \}. 
\end{aligned}
\end{align}
Initially we note that 
\begin{align*}
\trace  Q_{11} (Q_{12} Q_{22} Q_{21})^2  \geq 0 \quad \text{and} \quad  \trace Q_{12} Q_{22} Q_{21} (Q_{12} Q_{21})^2 \geq 0
\end{align*}  
since they both can be written as the trace of positive semi-definite matrices (see above for more details). Next, by (\ref{Verysimpletraceexpansion}) and (\ref{Simpletraceexpansion}),
\begin{align}\label{s13s24}
\begin{aligned}
\MoveEqLeft \trace \ \{ \tfrac{1}{2} Q_{11}^2 Q_{12} Q_{22}^3 Q_{21} + Q_{11} Q_{12}Q_{22}^2Q_{21}Q_{12}Q_{21} \}\\
& = \tfrac{1}{2} \lambda_1^2 \lambda_3^3 q_{13}^2 +  \tfrac{1}{2} \lambda_1^2 \lambda_4^3 q_{14}^2 +  \tfrac{1}{2} \lambda_2^2 \lambda_3^3 q_{23}^2 +  \tfrac{1}{2} \lambda_2^2 \lambda_4^3 q_{24}^2 \\ 
& +\lambda_1( \lambda_3^2 + \lambda_4^2) q_{13}^2q_{14}^2 +\lambda_2( \lambda_3^2 +  \lambda_4^2) q_{23}^2q_{24}^2 \\
& +\lambda_3^2(\lambda_1  + \lambda_2 ) q_{13}^2q_{23}^2 + \lambda_4^2 (\lambda_1+ \lambda_2) q_{14}^2q_{24}^2 \\ 
& +\lambda_1\lambda_3^2 q_{13}^4+ \lambda_1\lambda_4^2 q_{14}^4 + \lambda_2\lambda_3^2 q_{23}^4+ \lambda_2\lambda_4^2 q_{24}^4 \\
& -  (\lambda_1 + \lambda_2)(\lambda_3^2 +  \lambda_4^2) q_{13}q_{23}q_{14}q_{24}.
\end{aligned}
\end{align}
Again we bound the negative term by positive terms. Recall that  $\lambda_1 \geq \lambda_2$ and $\lambda_3 \geq  \lambda_4$, and that we may assume $q_{23}q_{24} \geq q_{13}q_{14}$ and  $q_{14}q_{24} \geq q_{13}q_{23}$ without loss of generality. Consequently, 
\begin{align*}
 \lambda_1 \lambda_4^2 q_{13}q_{23}q_{14}q_{24} &\leq  \lambda_1 \lambda_4^2  q_{14}^2 q_{24}^2\\
 \lambda_2 \lambda_3^2 q_{13}q_{23}q_{14}q_{24} &\leq \lambda_2 \lambda_3^2  q_{23}^2q_{24}^2 \\
 \lambda_2 \lambda_4^2 q_{13}q_{23}q_{14}q_{24} &\leq  \lambda_2 \lambda_4^2  q_{14}^2q_{24}^2,
\end{align*}
leaving  $ \lambda_1 \lambda_3^2 q_{13}q_{23}q_{14}q_{24}$ to be bounded. First note that
\begin{align*}
 \tfrac{1}{2} \lambda_1^2 \lambda_3^3 q_{13}^2- \lambda_1 \lambda_3^2 q_{13}q_{23}q_{14}q_{24} =  \lambda_1 \lambda_3^2 q_{13} ( \tfrac{1}{2} \lambda_1 \lambda_3 q_{13}  - q_{23}q_{14}q_{24}),
\end{align*}
so that non-negativity holds if $  \tfrac{1}{2} \lambda_1 \lambda_3 q_{13}  \geq q_{23}q_{14}q_{24}$. Assume $  \lambda_1 \lambda_3 q_{13}  \leq 2q_{23}q_{14}q_{24}$ and recall that $\lambda_2 \lambda_4 \geq q_{24}^2$ since $Q$ is positive definite. Then 
\begin{align*}
\lambda_1 \lambda_3^2 q_{13}q_{23}q_{14}q_{24} &\leq  2\lambda_3 q_{23}^2 q_{14}^2 q_{24}^2 \\
& \leq 2 \lambda_2 \lambda_3 \lambda_4 q_{23}^2 q_{14}^2 \\
& \leq \lambda_2 \lambda_3^2 q_{23}^4 + \lambda_2 \lambda_4^2 q_{14}^4 \\
&\leq  \lambda_2 \lambda_3^2 q_{23}^4 + \lambda_1 \lambda_4^2 q_{14}^4
\end{align*} 
so we have found bounding terms for the last expression. We conclude that (\ref{s13s24}) is non-negative and therefore, (\ref{kplusmequalseven}) is non-negative too. The case $k=4$ and $m=3$ follows by symmetry. It follows that the sum in Theorem \ref{Tracethm} is non-negative for $k+m=7$. 
\end{proof}


\begin{proof}[Proof of Theorem \ref{infcor}]
Corollary \ref{Rotpropcor} gives that (i) and (ii) are equivalent and Corollary~\ref{infcor1} gives that (i) implies infinite divisibility. 
\end{proof}


\textbf{Acknowledgments.} This research was supported by the Danish Council for Independent Research (Grant DFF - 4002-00003).

\newpage
\bibliographystyle{abbrv}


 \bibliography{bibliografi}

%
%
%
%
%
%
%
%
%

\end{document}